\newcommand{\B}{\mathcal{B}}
\newcommand{\E}{\mathbb{E}}
\newcommand{\C}{\mathbb{C}}
\newcommand{\D}{\mathbb{D}}
\newcommand{\Dbar}{\overline{\D}}
\newcommand{\R}{\mathbb{R}}
\newcommand{\T}{\mathbb{T}}
\newcommand{\Z}{\mathbb{Z}}
\newcommand{\N}{\mathbb{N}}
\newcommand{\loc}{\mathrm{loc}}
\renewcommand{\epsilon}{\varepsilon}
\DeclareMathOperator{\supp}{supp}
\let\temp\phi
\let\phi\varphi
\let\varphi\temp
\DeclarePairedDelimiter\absNew{\lvert}{\rvert}
\DeclarePairedDelimiter\normNew{\lVert}{\rVert}
\renewcommand{\abs}{\absNew}
\renewcommand{\norm}{\normNew}
\numberwithin{equation}{section}
\newtheorem{theorem}{Theorem}[section]
\newtheorem{proposition}[theorem]{Proposition}
\newtheorem{lemma}[theorem]{Lemma}
\newtheorem{corollary}[theorem]{Corollary}
\theoremstyle{definition}
\newtheorem{definition}[theorem]{Definition}
\title
[Tauberian theorems for sequences]
{Tauberian theorems for sequences and the Katznelson--Tzafriri theorem}
\begin{document}

\author[A.K.J. Pritchard and D. Seifert]{Andrew K. J. Pritchard and David Seifert}
\address{School of Mathematics, Statistics and Physics, Newcastle University, Herschel Building, Newcastle upon Tyne, NE1 7RU, United Kingdom}
\email[A.K.J.\ Pritchard]{a.k.j.pritchard2@ncl.ac.uk}
\email[D.\ Seifert]{david.seifert@ncl.ac.uk}

\subjclass{40E05, 47A35}

\keywords{Tauberian theorem,
quantified,
Katznelson--Tzafriri theorem,
rates of decay,
sequences}

\begin{abstract}
In this note, we present an alternative proof of a quantified Tauberian theorem for vector-valued sequences first proved in \cite{Sei15_Tauberian}. The theorem relates the decay rate of a bounded sequence with properties of a certain boundary function. We present a slightly strengthened version of this result, and illustrate how it can be used to obtain quantified versions of the  Katznelson--Tzafriri  theorem as well as results on Ritt operators.
\end{abstract}

\maketitle

\section{Introduction}

The classical Katznelson--Tzafriri theorem \cite[Theorem~1]{KatTza86_Power_bounded} states that, given a power-bounded operator $T \in \B(X)$ on a complex Banach space $X$, we have ${\lim_{n\to\infty} \norm{T^n(I-T)} = 0}$ if and only if $\sigma(T) \cap \T \subseteq \{1\}$.  Here we let  $\T \coloneqq \{\lambda \in \C : \abs{\lambda} = 1\}$, $\B(X)$ denotes the space of bounded linear operators on $X$, and $\sigma(T)$ is the spectrum of $T$.

In view of the many applications of results of this type (see e.g.\ \cite{Nev93_Iterations_book}, or the survey \cite[Section~4]{BatSei22_KT_Developments} and references therein), it is of interest to investigate the \emph{rate} at which the norms $\norm{T^n(I-T)}$ decay as $n \to \infty$. Such a quantified Katznelson--Tzafriri theorem was obtained by the second author in \cite{Sei15_Tauberian} and \cite{Sei16_KT_Rates}, through two different approaches. In \cite{Sei16_KT_Rates}, an approach based on a functional calculus argument was used, whereas the approach taken in \cite{Sei15_Tauberian} was to prove a quantified Tauberian theorem for bounded  sequences taking values in a Banach space.

The Tauberian theorem shows that if $x \in \ell^\infty(\Z_+;X)$ has bounded partial sums and a well-defined boundary function $F_x \in L^1_{\loc}(\T\setminus\{1\};X)$ (see Definition~\ref{def:bdy fctn} below), then $x \in c_0(\Z_+;X)$. It further quantifies the decay rate when $F_x \in C^k(\T \setminus\{1\};X)$ for $k \in \N \cup \{\infty\}$ and we have bounds on the derivatives of $F_x$. 

The first purpose of this paper is to give a modified proof of the quantified Tauberian theorem \cite[Theorem~2.1]{Sei15_Tauberian}. Specifically, in the proof of the Tauberian theorem in \cite{Sei15_Tauberian}, the sequence $x \in \ell^\infty(\Z_+;X)$ is approximated by another sequence $x^\epsilon$, which is defined as the convolution of $x$ with a sequence of Fourier coefficients of a function $\theta \mapsto \phi(\theta/\epsilon)$. The function $\phi$ chosen in \cite{Sei15_Tauberian} is a specific piecewise-linear function, and the proof of the theorem involves estimating particular boundary terms. The alternative proof we give below shows that we may replace the specific function $\phi$ with a sequence of sufficiently differentiable functions $(\phi_\ell)_{\ell=1}^\infty$, thus eliminating the need for a specific choice of $\phi$ and the computation of boundary terms. In fact, our result provides a slight improvement on the rate obtained previously.

The other purposes of this paper are to give a straightforward extension of the quantified Tauberian theorem to sequences with boundary functions $F_x $ defined on $\T\setminus E$, where $E $ is a finite subset of $\T$, and to show how the same techniques used in the proof of the Tauberian theorems can be applied to recover some known results about Ritt operators.

In Section~\ref{sec:single point}, we give a modified proof of Theorem~\ref{thm:disct taub single} using techniques from \cite[Section~1.3]{Hor03_Volume_I} and use this approach to recover a quantified version of the Katznelson--Tzafriri theorem and some facts about Ritt operators. 

In Section~\ref{sec:several points}, we state a natural extension of Theorem~\ref{thm:disct taub single} to sequences whose boundary function  has several singularities on the unit circle, and we apply this to deduce a generalised version of the quantified Katznelson--Tzafriri theorem.

We use the following notation throughout. We let $\Dbar \coloneqq \{\lambda \in \C : \abs{\lambda} \leq 1\}$ denote the closed unit disc in $\C$, and set $\E \coloneqq \C \setminus \Dbar$. Given a complex Banach space $X$ and an operator $T \in \B(X)$, we let  $R(\lambda,T) \coloneqq (\lambda I - T)^{-1}$ for any $\lambda \in \C \setminus \sigma(T)$. If $p,q$ are real-valued, then we write $p \lesssim q$ when there exists $C > 0$ such that $p \leq C q$. The constant $C$ is assumed to be independent of any parameters which are free to vary in the given context. Given a complex-valued function $\phi$, we denote by $\supp \phi$ the (closed) support of $\phi$.

\subsection*{Acknowledgements}
The authors would like to thank Gregory Debruyne for useful discussions and suggestions regarding the proof of Theorem~\ref{thm:disct taub mod}.


\section{Tauberian Theorems with Single Boundary Singularity}\label{sec:single point}

We begin with the definition of a \emph{boundary function}.
\begin{definition}\label{def:bdy fctn}
Let $X$ be a complex Banach space and let $G\colon \E \to X$ be a holomorphic function. A function $F \in L^1_{\loc}(\T\setminus\{1\};X)$ is a \emph{boundary function} for $G$ if
\begin{equation*}
    \lim_{r \to 1+} \int_{-\pi}^\pi \psi(\theta) G(re^{i\theta}) \dd\theta
    = \int_{-\pi}^\pi \psi(\theta) F(e^{i\theta}) \dd\theta
\end{equation*}
for all $\psi \in  C([-\pi,\pi])$ such that $ 0 \notin \supp \psi$.
\end{definition}

If $X$ is a complex Banach space and $x \in \ell^\infty(\Z_+;X)$, then we may define a holomorphic function $G_x\colon \E \to X$ by
\begin{equation*}
    G_x(\lambda) \coloneqq \sum_{n = 0}^\infty \frac{x_n}{\lambda^{n+1}}.
\end{equation*}
Given $\delta > 0$, $k \in \N$ and a continuous, non-increasing function $m\colon (0,\delta] \to (0,\infty)$, we define functions $m_k, m_{\log} \colon (0,\delta] \to (0,\infty)$ by
\begin{equation*}
    m_k(\epsilon) \coloneqq m(\epsilon) \left( \frac{m(\epsilon)}{\epsilon} \right)^{1/k},
    \qquad
    m_{\log}(\epsilon) \coloneqq m(\epsilon) \log\left( 1 + \frac{m(\epsilon)}{\epsilon} \right).
\end{equation*}

\begin{theorem}[{\cite[Theorem~2.1]{Sei15_Tauberian}}]\label{thm:disct taub single}
Let $X$ be a complex Banach space and suppose that $x \in \ell^\infty(\Z_+;X)$ satisfies
\begin{equation}\label{eq:partial sum bound}
    \sup_{n\ge0} \bigg\|{\sum_{k=0}^n x_k} \bigg\|< \infty.
\end{equation}
If there exists a boundary function $F_x \in L^1_{\loc}(\T \setminus \{1\}; X)$ for $G_x$, then $x \in c_0(\Z_+;X)$. 

Furthermore, if $m\colon (0,\pi] \to (0,\infty)$ is a continuous, non-increasing function, then the following hold:
\begin{enumerate}[(i)]
    \item[\textup{(i)}]  Suppose that $F_x \in C^k(\T \setminus \{1\}; X)$ for some $k \geq 1$ and that there exists  $C>0$ such that 
    \begin{equation*}
        \norm{F_x^{(j)}(e^{i\theta})} \leq C \abs{\theta}^{\ell - j} m(\abs{\theta})^{\ell+1},\qquad \abs{\theta} \in (0,\pi],
    \end{equation*}
for   $0 \leq j \leq \ell \leq k$. Then, for every $c > 0$,
    \begin{equation*}
        \norm{x_n} = O\big(m_k^{-1}(cn)\big), \qquad n \to \infty.
    \end{equation*}

    \item[\textup{(ii)}]  \label{thm:disct taub smooth}
    Suppose that $F_x \in C^\infty(\T \setminus \{1\}; X)$ and that there exists  $C>0$ such that 
    \begin{equation*}
        \norm{F_x^{(j)}(e^{i\theta})} \leq C j! \abs{\theta} m(\abs{\theta})^{j+1},\qquad \abs{\theta} \in (0,\pi],
    \end{equation*}
 for all $j \geq 0$. Then, for every $c \in(0,1)$,
    \begin{equation*}
        \norm{x_n} = O\left( \frac{1}{n} + m_{\log}^{-1}(cn) \right), \qquad n \to \infty.
    \end{equation*}
\end{enumerate}
\end{theorem}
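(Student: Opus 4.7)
The plan is to follow a Cauchy-integral / smooth cut-off strategy in the spirit of \cite{Sei15_Tauberian}, replacing the explicit piecewise-linear test function used there by a family of $C^\infty$ cut-offs $(\phi_\ell)_{\ell\ge 1}$ with sharp derivative bounds, constructed as in \cite[Section~1.3]{Hor03_Volume_I}. Concretely, fix $\phi_\ell \in C^\infty_c(\R)$ with $\phi_\ell = 1$ on $[-1/2,1/2]$, $\supp \phi_\ell \subseteq [-1,1]$, and $\|\phi_\ell^{(j)}\|_\infty \le (C\ell)^{j}$ for $0 \le j \le \ell$, and rescale via $\phi_{\ell,\epsilon}(\theta) \coloneqq \phi_\ell(\theta/\epsilon)$. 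For $r>1$ the Laurent expansion of $G_x$ together with Cauchy's formula yields
\begin{equation*}
    x_n = \frac{r^{n+1}}{2\pi} \int_{-\pi}^\pi e^{i(n+1)\theta} G_x(re^{i\theta}) \, d\theta,
\end{equation*}
which I split as $x_n = I_n + J_n$ via the partition $\phi_{\ell,\epsilon} + (1-\phi_{\ell,\epsilon})$ of the integrand.

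The integrand of $J_n$ is supported away from $0$, so Definition~\ref{def:bdy fctn} lets me send $r\to 1^+$ and replace $G_x$ by $F_x$ on $\T$. Integration by parts $k$ times in case (i), or $\ell$ times in case (ii), produces no boundary terms by periodicity. An application of Leibniz together with the hypothesised bounds on $F_x^{(j)}$ and $\phi_{\ell,\epsilon}^{(k-j)}$, combined with the observation that the derivatives of $\phi_{\ell,\epsilon}$ of positive order live on the annulus $\{\epsilon/2 \le |\theta| \le \epsilon\}$, yields $\|J_n\| \lesssim n^{-k} m(\epsilon)^{k+1}$ in case (i), with the dominant contribution coming from the term where all derivatives fall on $F_x$.

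For $I_n$ I expand $G_x$ as a series in $\lambda^{-1}$, interchange with the integral, and let $r \to 1^+$ (justified by the rapid decay of $\widehat{\phi_{\ell,\epsilon}}$) to obtain $I_n = \sum_{j\ge 0} x_j \widehat{\phi_{\ell,\epsilon}}(j-n)$. Abel summation using the bounded partial sums $y_j = \sum_{i\le j} x_i$ rewrites this as $\sum_{j\ge 0} y_j \widehat{\psi_{\ell,\epsilon}}(j-n)$, where $\psi_{\ell,\epsilon}(\theta) \coloneqq \phi_{\ell,\epsilon}(\theta)(1-e^{-i\theta})$. The decisive estimate is
\begin{equation*}
    \sum_{m\in\Z} \bigl|\widehat{\psi_{\ell,\epsilon}}(m)\bigr| \lesssim \epsilon,
\end{equation*}
which I would prove by the substitution $\theta = \epsilon u$: up to a factor of $\epsilon^2$, $\widehat{\psi_{\ell,\epsilon}}(m)$ equals the Fourier transform on $\R$ of a fixed Schwartz function built from $\phi_\ell$, evaluated at $\epsilon m$, and the sum then becomes a mesh-$\epsilon$ Riemann sum for an $L^1$ integral over $\R$. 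This gives $\|I_n\| \lesssim \epsilon$, independently of $n$.

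Assembling, $\|x_n\| \lesssim \epsilon + n^{-k}m(\epsilon)^{k+1}$, and the choice $\epsilon = m_k^{-1}(cn)$ balances both terms and proves (i). The qualitative statement $x\in c_0$ follows from the same splitting after replacing integration by parts in $J_n$ by the Riemann--Lebesgue lemma (which only needs $F_x \in L^1_{\loc}(\T\setminus\{1\};X)$) and then letting $\epsilon\to 0$ after $n\to\infty$. For (ii), the factorial bound on $F_x^{(\ell)}$ combined with Stirling suggests taking $\ell \asymp n/m(\epsilon)$, producing near-exponential decay in $n/m(\epsilon)$ that trades against the $\epsilon$ piece to yield the $m_{\log}^{-1}(cn)$ rate (the extra $1/n$ accounts for a lower-order contribution). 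The main obstacle I anticipate is uniform control of the constants as $\ell\to\infty$ in part (ii): in particular the Schwartz-norm constants behind the $I_n$ bound must remain finite when $\phi_\ell$ is varied with $\ell$, and the joint optimisation of $\ell$ and $\epsilon$ must be carried out carefully to avoid losing powers or logarithms in the stated rate.
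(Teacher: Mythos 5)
Your overall scheme is the same as the paper's (and as in \cite{Sei15_Tauberian}): split $x_n$ into a piece governed by the Fourier coefficients of the rescaled cut-off near $\theta=0$ (your $I_n$, the paper's $x_n-x^{\epsilon,\ell}_n$, estimated via Abel summation and \eqref{eq:partial sum bound}) and a piece supported away from $0$ where the boundary function may be substituted and integrated by parts (your $J_n$, the paper's $x^{\epsilon,\ell}_n$). Your treatment of the qualitative statement and of part (i) is essentially sound --- note that for fixed $k$ no family $(\phi_\ell)$ is needed there at all, a single smooth cut-off suffices --- and indeed the paper only re-proves part (ii), as Theorem~\ref{thm:disct taub mod}. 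The genuine gaps are both in part (ii).

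First, the derivative bounds you impose, $\norm{\phi_\ell^{(j)}}_{L^\infty(\R)}\le (C\ell)^j$ for $0\le j\le\ell$, grow with $\ell$ already at $j=1,2$. Your ``decisive estimate'' $\sum_{m\in\Z}\abs{\widehat{\psi_{\ell,\epsilon}}(m)}\lesssim\epsilon$ needs two integrations by parts for $\abs{m}>1/\epsilon$, i.e.\ control of $\phi_\ell''$, and with your bounds it degrades to $\lesssim\ell^{2}\epsilon$. Since in (ii) you must take $\ell\asymp n/m(\epsilon)$, this destroys the estimate of $I_n$. You flag this as an anticipated obstacle but do not resolve it; the resolution is precisely Lemma~\ref{lem:step funcs conv}(iv): in the H\"ormander convolution construction, keep the widths of the first few mollifying factors fixed independently of $\ell$, so that $\phi_\ell'$ and $\phi_\ell''$ are bounded uniformly in $\ell$ while only the derivatives of order $j>k_0+1$ grow like $(C\ell)^{j-(k_0+1)}$.

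Second, the joint optimisation in (ii) is only gestured at, and the genuinely delicate step is absent. After $k\asymp cn/m(\epsilon)$ integrations by parts, the terms in which three or more derivatives fall on the cut-off produce, with $\epsilon=m_{\log}^{-1}(cn)$ and Stirling, a contribution of the form $\frac{c^{k}}{n}\sum_{j=3}^{k}\bigl(\mathrm{const}/(\epsilon\, m(\epsilon))\bigr)^{j-2}$ (cf.\ \eqref{eq:xne bound}). This is harmless only when $\epsilon\, m(\epsilon)$ stays large; if $\liminf_{\epsilon\to0+}\epsilon\, m(\epsilon)$ is small or zero the series explodes (for bounded $m$ it is double-exponentially large in $n$), and the obvious patch of replacing $m(\epsilon)$ by $\max\{m(\epsilon),\,9/\epsilon\}$ only yields $O(\log n/n+m_{\log}^{-1}(cn))$, losing a logarithm against the stated rate. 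The paper deals with this by a case analysis on $\delta=\liminf_{\epsilon\to0+}\epsilon\, m(\epsilon)$: a rescaling $\widetilde m(\epsilon)=m(\delta\epsilon/9)$ when $\delta\in(0,8]$, and, when $\delta=0$, a Taylor-series and edge-of-the-wedge argument (\cite{Rud71_Edge_Wedge}) showing that $G_x$ extends holomorphically across all of $\T$, whence $\norm{x_n}$ decays geometrically. Without an argument of this kind (or the explicit boundary-term computation of \cite{Sei15_Tauberian}), your ``trade $\epsilon$ against near-exponential decay'' step does not reach the claimed bound $O\bigl(\tfrac1n+m_{\log}^{-1}(cn)\bigr)$.
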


In order to provide our alternative proof, we require the following lemma based on techniques in \cite[Section~1.3]{Hor03_Volume_I}, which allows us to construct functions $\phi_\ell$ with an arbitrary number of derivatives and uniform upper bounds for those derivatives.

\begin{lemma}\label{lem:step funcs conv}
Let $k_0 \in \N$. For every $\ell \in \N$, there exists a function $\phi_\ell \in C^{\ell+k_0}(\R)$ and a constant $C > 1$ independent of $\ell$ such that the following hold:
\begin{enumerate}
    \item[\textup{(i)}] $\phi_\ell(\theta) = 1$ for $\abs{\theta} \leq 1$;
    
    \item[\textup{(ii)}]  $\phi_\ell(\theta) = 0$ for $\abs{\theta} \geq 2$;
    
    \item[\textup{(iii)}]  $\phi_\ell(\theta) \in (0,1)$ for  $1 < \abs{\theta} < 2$;
    
    \item[\textup{(iv)}]  $\max_{1 \leq j \leq k_0+1} \norm{\phi_\ell^{(j)}}_{L^\infty(\R)} \leq C$;
    
    \item[\textup{(v)}]  $\norm{\phi_\ell^{(j)}}_{L^\infty(\R)} \leq C (2^{k_0+2} \ell)^{j-(k_0+1)}$ for $k_0+1 < j \leq \ell + k_0$.
\end{enumerate}
\end{lemma}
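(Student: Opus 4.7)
The approach I would take is the classical iterated-convolution construction of smooth bump functions, following \cite[Section~1.3]{Hor03_Volume_I}: build $\phi_\ell$ by convolving the characteristic function of $[-3/2, 3/2]$ with a family of normalised box functions whose widths encode both the support constraint and the desired derivative bounds. Writing $\chi_b \coloneqq (2b)^{-1}\mathbf{1}_{[-b,b]}$ for the normalised box of half-width $b$, I would fix $r \coloneqq (2^{k_0+1}-1)/(2^{k_0+2}(k_0+1))$ and $s_\ell \coloneqq 1/(2^{k_0+2}\ell)$, which satisfy $(k_0+1)r + \ell s_\ell = 1/2$ and $s_\ell \leq r$ for every $\ell \in \N$, and then set
\[
\phi_\ell \coloneqq \mathbf{1}_{[-3/2, 3/2]} * \chi_r^{*(k_0+1)} * \chi_{s_\ell}^{*\ell},
\]
where $\chi_b^{*m}$ denotes the $m$-fold self-convolution.

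Properties (i)--(iii) then follow by analysing the density $h_\ell \coloneqq \chi_r^{*(k_0+1)} * \chi_{s_\ell}^{*\ell}$, which is supported exactly in $[-1/2, 1/2]$, has total mass $1$, and is continuous and strictly positive on $(-1/2, 1/2)$ by the standard positivity argument for convolutions of non-negative functions. The identity $\phi_\ell(t) = \int_{t-3/2}^{t+3/2} h_\ell(u)\,du$ immediately gives $\phi_\ell = 1$ on $[-1, 1]$, $\phi_\ell = 0$ outside $[-2, 2]$, and $\phi_\ell \in (0, 1)$ on $(-2,-1) \cup (1,2)$. For the regularity claim, an $N$-fold convolution of box functions is piecewise polynomial of degree $N-1$ and hence of class $C^{N-2}$; convolving with $\mathbf{1}_{[-3/2, 3/2]}$ contributes one more derivative, so with $N = k_0 + 1 + \ell$ we obtain $\phi_\ell \in C^{k_0+\ell}(\R)$.

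For the derivative bounds (iv) and (v), the key tool is the distributional identity
\[
(g * \chi_b')(t) = \frac{g(t+b) - g(t-b)}{2b},
\]
valid for $g \in L^\infty(\R)$, which yields $\norm{g * \chi_b'}_{L^\infty} \leq b^{-1}\norm{g}_{L^\infty}$. Distributing the $j$ derivatives in $\phi_\ell^{(j)}$ one at a time across $j$ distinct box factors, and using that the convolution of $\mathbf{1}_{[-3/2, 3/2]}$ with the remaining box factors has $L^\infty$-norm at most $1$, I obtain $\norm{\phi_\ell^{(j)}}_{L^\infty} \leq 1/(b_{i_1}\cdots b_{i_j})$ for any choice of distinct indices $i_1, \ldots, i_j$. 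Choosing these to correspond to the $j$ largest widths yields $1/r^j$ for $j \leq k_0+1$ and $(1/r^{k_0+1})(2^{k_0+2}\ell)^{j-k_0-1}$ for $k_0+1 < j \leq k_0+\ell$. With $C \coloneqq 1/r^{k_0+1}$, a constant depending only on $k_0$, both (iv) and (v) follow.

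The main obstacle I anticipate is the careful treatment of the distributional derivatives: since $\chi_b' = (2b)^{-1}(\delta_{-b} - \delta_b)$ is a finite signed measure of total variation $1/b$, successive convolutions of such derivatives produce finite discrete measures whose total variations factorise as $\prod_k 1/b_{i_k}$, and one has to verify that inserting these measures into a convolution with an $L^\infty$ function preserves the stated $L^\infty$ bound. Once this estimate is in place, the choice of two widths---$r$ fixed and $s_\ell$ shrinking like $1/\ell$---together with the split of the box factors into $k_0+1$ ``large'' and $\ell$ ``small'' ones, is essentially forced by matching the uniform bound (iv) against the $\ell$-dependent bound in (v).
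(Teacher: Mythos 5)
Your proof is correct and follows essentially the same route as the paper: both construct $\phi_\ell$ as the characteristic function of $[-3/2,3/2]$ convolved with $k_0+1$ ``wide'' normalised boxes of $\ell$-independent width and $\ell$ narrow boxes of half-width $1/(2^{k_0+2}\ell)$, and both derive (iv)--(v) by spending each derivative on a distinct box factor via the bound $\norm{(g*\chi_b)'}_{L^\infty}\le b^{-1}\norm{g}_{L^\infty}$. The only (immaterial) difference is that the paper takes the wide boxes with dyadic half-widths $1/4,1/8,\dots,2^{-(k_0+2)}$ rather than your equal half-widths $r$, with the same total spread $1/2$.
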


\begin{proof}
For real numbers $a,b$ with $a < b$, define the step function $H_{[a,b]}\colon \R \to \R$ by
\begin{equation*}
    H_{[a,b]}(\theta) \coloneqq (b-a)^{-1} \chi_{[a,b]}(\theta),
\end{equation*}
where $\chi_{[a,b]}$ is the characteristic function of the interval $[a,b]$. If $\phi \in C^m(\R)$ and $a>0$, then
\begin{equation*}
    \phi* H_{[-a,a]}(\theta) 
    = \frac{1}{2a} \int_{-a}^a \phi(\theta-s) \dd{s}
    = \frac{1}{2a} \int_{\theta-a}^{\theta+a} \phi(s) \dd{s}.
\end{equation*}
Hence $\phi* H_{[-a,a]} \in C^{m+1}(\R)$ with 
\begin{equation*}
(\phi * H_{[-a,a]})'(\theta) = \frac{\phi(\theta+a) - \phi(\theta-a)}{2a},    
\end{equation*}
which implies that
\begin{equation*}
    \norm{(\phi * H_{[-a,a]})'}_{L^\infty(\R)} \leq \frac{1}{a} \norm{\phi}_{L^\infty(\R)}.
\end{equation*}

If $\phi \in L^1(\R)$ and $0 < a < b$ are such that $\phi(\theta) = 1$ for $\abs{\theta} \leq a$ and $\phi(\theta) = 0$ for $\abs{\theta} \geq b$, then for all $c \in (0,a)$, we have $(\phi * H_{[-c,c]})(\theta) = 1$ for $\abs{\theta} \leq a - c$ and $(\phi * H_{[-c,c]})(\theta) = 0$ for $\abs{\theta} \leq b + c$.

We now define $\phi_{\ell}$ as
\begin{equation*}
    \phi_\ell \coloneqq \chi_{[-3/2,3/2]} * H_{[-\frac{1}{4}, \frac{1}{4}]} * H_{[-\frac{1}{8}, \frac{1}{8}]}
    * \ldots * H_{[-\frac{1}{2^{k_0+2}}, \frac{1}{2^{k_0+2}}]} * (H_{[-\frac{1}{2^{k_0+2}\ell}, \frac{1}{2^{k_0+2}\ell}]})^{*\ell}.
\end{equation*}
By the above calculations, we see that $\phi_\ell$ satisfies all of the claimed properties.
\end{proof}

We are now able to give an alternative proof of Theorem~\ref{thm:disct taub single}(ii). Note also that we obtain a slight improvement in the rate of decay.

\begin{theorem}\label{thm:disct taub mod}
Let $X$ be a complex Banach space and suppose that $x \in \ell^\infty(\Z_+;X)$ satisfies~\eqref{eq:partial sum bound}.
Suppose further that $G_x$ admits a smooth boundary function $F_x \in C^\infty(\T\setminus\{1\}; X)$. Let $m\colon (0,\pi] \to (0,\infty)$ be a continuous, non-increasing function and assume there exists $C > 0$ such that
\begin{equation}\label{eq:bdy func deriv bound}
    \norm{F_x^{(j)}(e^{i\theta})} \leq C j! \abs{\theta} m(\abs{\theta})^{j+1}, \qquad 0 < \abs{\theta} \leq \pi,
\end{equation}
for all $j\ge0$.
Then, for every $c \in (0,1)$,
\begin{equation}\label{eq:mlog seq bound}
    \norm{x_n} = O\big(m_{\log}^{-1}(cn)\big), \qquad n \to \infty.
\end{equation}
\end{theorem}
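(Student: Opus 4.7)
The plan is to mimic the split used in \cite{Sei15_Tauberian}, but with the specific piecewise-linear bump replaced by the family $(\phi_\ell)_{\ell \in \N}$ from Lemma~\ref{lem:step funcs conv}, whose smoothness is a free parameter. Fix $k_0 \geq 1$ (say $k_0 = 1$), $\epsilon \in (0,\pi/2]$, and $\ell \in \N$. Starting from the Fourier inversion identity
\begin{equation*}
    x_n = \frac{r^{n+1}}{2\pi}\int_{-\pi}^\pi G_x(re^{i\theta})\,e^{i(n+1)\theta}\dd\theta,\qquad r>1,
\end{equation*}
I will decompose $x_n = J_1(\epsilon,\ell,r) + J_2(\epsilon,\ell,r)$ via the partition $1 = \phi_\ell(\theta/\epsilon) + (1-\phi_\ell(\theta/\epsilon))$ of the integrand.

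For $J_2$, since $1-\phi_\ell(\cdot/\epsilon)$ is supported away from $\theta = 0$, Definition~\ref{def:bdy fctn} allows me to take $r \to 1^+$, replacing $G_x$ by $F_x$. I then integrate by parts $j := \ell + k_0$ times in $\theta$, gaining a factor $n^{-j}$ (the boundary terms vanish by periodicity). Leibniz's rule separates the result into the term with no derivative on $\phi_\ell$, confined to $|\theta| \geq \epsilon$ and bounded by a multiple of $j!\int_\epsilon^\pi \theta\,m(\theta)^{j+1}\dd\theta$ via \eqref{eq:bdy func deriv bound}, plus terms with $i \geq 1$ derivatives on $\phi_\ell(\cdot/\epsilon)$. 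These latter terms are supported on $\{\epsilon \leq |\theta| \leq 2\epsilon\}$ and estimated by $\binom{j}{i}\epsilon^{2-i}m(\epsilon)^{j-i+1}\|\phi_\ell^{(i)}\|_{L^\infty(\R)}$ using the monotonicity of $m$ and parts~(iv)--(v) of Lemma~\ref{lem:step funcs conv}.

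For $J_1$, I expand $G_x$ as a power series, integrate termwise, and send $r \to 1^+$ (justified by dominated convergence using the summability of the Fourier coefficients of the smooth function $\phi_\ell(\cdot/\epsilon)$) to obtain $J_1 = \sum_{m \geq 0} a_{n-m}\,x_m$ with $a_k = \frac{1}{2\pi}\int_{-\pi}^\pi \phi_\ell(\theta/\epsilon)e^{ik\theta}\dd\theta$. Abel summation rewrites this as $\sum_{m \geq 0} S_m(a_{n-m} - a_{n-m-1})$, where $S_m = \sum_{k=0}^m x_k$, so by \eqref{eq:partial sum bound} it suffices to bound $\sum_k |a_k - a_{k-1}|$. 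Recognising $a_k - a_{k-1}$ as a Fourier coefficient of $\phi_\ell(\theta/\epsilon)(1-e^{-i\theta})$ and integrating by parts up to $j$ times furnishes the decay $|a_k - a_{k-1}| \lesssim B_j |k|^{-j}$ with $B_j$ controlled by the same sup-norms of $\phi_\ell^{(i)}$; splitting the $k$-sum at a threshold $K \asymp \ell/\epsilon$ and using the direct bound $|a_k - a_{k-1}| \lesssim \epsilon^2$ below that threshold produces the required estimate.

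Finally, with $j = \ell + k_0$, the combined bounds take the rough shape
\begin{equation*}
    \norm{x_n} \lesssim \epsilon\, m(\epsilon) + \bigl(C\,m(\epsilon)/(n\epsilon)\bigr)^{j}\,j! + \bigl(C\ell/(n\epsilon)\bigr)^{\ell}
\end{equation*}
up to tame polynomial factors. Taking $\ell \asymp \log(1 + m(\epsilon)/\epsilon)$ turns the geometric terms into an expression of order $e^{-c' n\epsilon/m(\epsilon)}$, and setting $\epsilon = m_{\log}^{-1}(cn)$ for $c \in (0,1)$ sufficiently small balances the remaining contributions and yields \eqref{eq:mlog seq bound}. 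The main obstacle will be the combinatorial bookkeeping reconciling the factorials in \eqref{eq:bdy func deriv bound}, the binomial coefficients from Leibniz's rule, and the growth $(2^{k_0+2}\ell)^{j-(k_0+1)}$ in Lemma~\ref{lem:step funcs conv}~(v); it is precisely the ability to tune $\ell$ with $\epsilon$ that allows the extra $1/n$ term in \cite[Theorem~2.1]{Sei15_Tauberian} to be absorbed into the single rate $m_{\log}^{-1}(cn)$.
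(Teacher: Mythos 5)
Your decomposition is essentially the paper's: the same family $(\phi_\ell)$ from Lemma~\ref{lem:step funcs conv} with $k_0=1$, a split of $x_n$ into a piece carried by $\phi_\ell(\cdot/\epsilon)$ (handled by Abel summation and \eqref{eq:partial sum bound}) and a piece carried by $1-\phi_\ell(\cdot/\epsilon)$ (handled via the boundary function, many integrations by parts, \eqref{eq:bdy func deriv bound} and Lemma~\ref{lem:step funcs conv}(iv)--(v)), with $\ell$ tuned to $\epsilon$ and $n$. However, your treatment of the near piece $J_1$ has a genuine flaw: using the crude bound $\abs{a_k-a_{k-1}}\lesssim\epsilon^2$ for all $\abs{k}\le K$ with $K\asymp\ell/\epsilon$ makes the head of the sum contribute $K\epsilon^2\asymp\ell\epsilon$, not $\epsilon$. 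Since at your final choices $\ell\asymp\log(1+m(\epsilon)/\epsilon)$ and $\epsilon=m_{\log}^{-1}(cn)$, this is exactly a logarithmic loss (for $m(\epsilon)=\epsilon^{-\alpha}$ it gives $(\log n/n)^{1/\alpha}\log n$ rather than $(\log n/n)^{1/\alpha}$), i.e.\ it destroys the rate the theorem asserts. The whole point of property (iv) of Lemma~\ref{lem:step funcs conv} is that the first two derivatives of $\phi_\ell$ are bounded \emph{uniformly in $\ell$}: integrating by parts only twice gives $\abs{a_k-a_{k-1}}\lesssim k^{-2}$ with an $\ell$-independent constant, and splitting at $\abs{k}\asymp 1/\epsilon$ (not $\ell/\epsilon$) yields $\norm{J_1}\lesssim\epsilon$ uniformly in $\ell$; no high-order integration by parts is needed, or usable, in this piece. (Relatedly, the first term $\epsilon\, m(\epsilon)$ in your ``rough shape'' cannot be right: with $\epsilon=m_{\log}^{-1}(cn)$ it is not $O(m_{\log}^{-1}(cn))$ unless $m$ is bounded; it must be $\epsilon$.)

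The second, more serious, gap is the endgame. After the (corrected) estimates, the far piece still produces, besides a term $\le\epsilon$, a sum of powers of $1/(\epsilon\, m(\epsilon))$ — your third term is of the form $(Cc/(\epsilon m(\epsilon)))^{\ell}$ at the final choices. Taking $c$ ``sufficiently small'' does not control this when $\liminf_{\epsilon\to0+}\epsilon\, m(\epsilon)=0$, since then $c/(\epsilon m(\epsilon))$ is unbounded however small $c$ is; and even a positive but small liminf needs an extra step (the paper rescales $m$). In the degenerate case the paper abandons the multiplier estimate entirely: \eqref{eq:bdy func deriv bound} shows $F_x$ extends holomorphically to discs of radius $1/m(\abs{\theta})$ about $e^{i\theta}$, the edge-of-the-wedge theorem then extends $G_x$ holomorphically across all of $\T$, Cauchy's formula gives $\norm{x_n}=O(r^n)$, and a separate computation shows $m_{\log}^{-1}(cn)$ decays no faster than that geometric rate. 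Without an argument playing this role, tuning $\ell$ with $\epsilon$ does not by itself ``absorb'' the $1/n$: at best you recover the old bound $O(1/n+m_{\log}^{-1}(cn))$ of \cite{Sei15_Tauberian}, not \eqref{eq:mlog seq bound}. Finally, the theorem claims the conclusion for \emph{every} $c\in(0,1)$; proving it only for small $c$ is a genuine weakening, since $m_{\log}^{-1}$ is non-increasing and $m_{\log}^{-1}(c'n)$ for $c'<c$ is in general a larger quantity than $m_{\log}^{-1}(cn)$.
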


\begin{proof}
We use a similar approach to the proof of \cite[Theorem~2.1]{Sei15_Tauberian}. For each $\ell \geq 1$, let $\phi_\ell$ be a function as in Lemma~\ref{lem:step funcs conv} for $k_0 = 1$, restricted to the interval $[-\pi,\pi]$. Let $\psi_\ell(\theta) \coloneqq 1 - \phi_\ell(\theta)$ and, for $\epsilon \in (0,\pi/2]$, we define sequences $y^{\epsilon,\ell}$, $z^{\epsilon,\ell}$ by
\begin{equation*}
    y_n^{\epsilon,\ell} \coloneqq \frac{1}{2\pi} \int_{-\pi}^\pi e^{in\theta} \psi_\ell(\theta/\epsilon) \dd{\theta}, \qquad
    z_n^{\epsilon,\ell} \coloneqq \frac{1}{2\pi} \int_{-\pi}^\pi e^{in\theta} \phi_\ell(\theta/\epsilon) \dd{\theta}
\end{equation*}
for each $n \in \Z$. Note that as $\phi_\ell, \psi_\ell \in C^{\ell+1}([-\pi,\pi]) \subseteq C^2([-\pi,\pi])$, integrating by parts twice gives $y^{\epsilon,\ell}, z^{\epsilon,\ell} \in \ell^1(\Z)$. Hence, after extending $x$ to a sequence in $\ell^\infty(\Z;X)$ by setting $x_n = 0$ for $n < 0$, we may define the convolution $x^{\epsilon,\ell} \coloneqq x * y^{\epsilon,\ell} \in \ell^\infty(\Z;X)$. We now estimate separately $\norm{x_n - x^{\epsilon,\ell}_n}$ and $\norm{x^{\epsilon,\ell}_n}$ for $n \geq 0$.

Note that for each $n \in \Z$,
\begin{equation*}
    x_n - x^{\epsilon,\ell}_n = (x * z^{\epsilon,\ell})_n = \sum_{j=0}^\infty s_j (z^{\epsilon,\ell}_{n-j} - z^{\epsilon,\ell}_{n-j-1}),
\end{equation*}
where $s_n \coloneqq \sum_{j=0}^n x_j$. Also, for each $n \in \Z$,
\begin{equation*}
    z^{\epsilon,\ell}_n - z^{\epsilon,\ell}_{n-1} = \frac{1}{2\pi} \int_{-\pi}^\pi e^{in\theta} (1 - e^{-i\theta}) \phi_\ell(\theta/\epsilon) \dd{\theta}.
\end{equation*}
Using the fact that $\supp \phi_{\ell} \subseteq [-2,2]$ and $\norm{\phi_\ell}_{L^\infty(\R)} = 1$ we have, for every $n \in\Z$,
\begin{equation}
    \abs{z^{\epsilon,\ell}_n - z^{\epsilon,\ell}_{n-1}} 
     \leq 
    \frac{1}{2\pi} \int_{-\pi}^\pi \abs{1 - e^{-i\theta}} \abs{\phi_\ell(\theta/\epsilon)} \dd{\theta} 
 \lesssim
    \int_{-2\epsilon}^{2\epsilon} \abs{\theta} \dd{\theta} 
    \lesssim
    \epsilon^2,
    \label{eq:z-eps bound}
\end{equation}
where here, and for the remainder of the proof, the implicit constant is independent of $\epsilon$, $\ell$ and $n$. If $n \neq 0$, then integrating by parts twice gives
\begin{equation}
\begin{aligned}
    \abs{z^{\epsilon,\ell}_n - z^{\epsilon,\ell}_{n-1}} 
    & = 
    \abs*{\frac{1}{2n^2\pi} \int_{-\pi}^\pi e^{in\theta} \dv[2]{\theta} ((1 - e^{-i\theta}) \phi_\ell(\theta/\epsilon)) \dd{\theta}} 
    \\
    & \lesssim
    \frac{1}{n^2} \int_{-\pi}^\pi \abs{\phi_\ell(\theta/\epsilon)} + \frac{\abs{\phi_\ell'(\theta/\epsilon)}}{\epsilon} + \frac{\abs{(1 - e^{i\theta}) \phi_\ell''(\theta/\epsilon)}}{\epsilon^2} \dd{\theta}
    \lesssim
    \frac{1}{n^2},
    \label{eq:z-n_square bound}
\end{aligned}
\end{equation}
where the implicit constant depends only on $\max_{j=0,1,2} \norm{\phi_\ell^{(j)}}_{L^\infty([-\pi,\pi])}$, which we have taken to be independent of $\ell$ by Lemma~\ref{lem:step funcs conv}. Now, using \eqref{eq:partial sum bound}, \eqref{eq:z-eps bound} and \eqref{eq:z-n_square bound}, we have 
\begin{equation}\label{eq:x_n-x_ne bound}
    \norm{x_n - x^{\epsilon,\ell}_n}
    \lesssim
    \sum_{\substack{j \geq 0 \\ \abs{j-n} \leq 1/\epsilon}} \hspace{-1em} \epsilon^2 \hspace{1em}
    + \sum_{\substack{j \geq 0 \\ \abs{j-n} > 1/\epsilon}} \hspace{-1em} \frac{1}{(n-j)^2}
    \lesssim \epsilon
\end{equation}
for all $n\ge0$.

We now estimate $\norm{x^{\epsilon,\ell}_n}$. As in \cite{Sei15_Tauberian}, we may use the dominated convergence theorem, Fubini's theorem and the definition of a boundary function to deduce that, for every $n \geq 0$,
\begin{equation*}
    x^{\epsilon,\ell}_n = \frac{1}{2\pi} \int_{-\pi}^\pi e^{i(n+1) \theta} F_x(e^{i\theta}) \psi_\ell(\theta/\epsilon) \dd{\theta}.
\end{equation*}
Integrating by parts $k \geq 1$ times gives
\begin{equation*}
    x^{\epsilon,\ell}_n = 
    \frac{1}{2\pi} \sum_{j=0}^k \frac{A_{n,k,j}}{\epsilon^j} \int_{-\pi}^\pi e^{i(n+k-j+1) \theta} F_x^{(k-j)}(e^{i\theta}) \psi_\ell^{(j)}(\theta/\epsilon) \dd{\theta},
\end{equation*}
where the complex-valued coefficients $A_{n,k,j}$ satisfy
\begin{equation*}
    \abs{A_{n,k,j}} \leq \frac{1}{n^k}\binom{k}{j} .
\end{equation*}
Using this with Lemma~\ref{lem:step funcs conv} and \eqref{eq:bdy func deriv bound}, and noting that $\psi_\ell^{(j)} = -\phi_\ell^{(j)}$ for $j \geq 1$, we have
\begin{align*}
    \norm{x^{\epsilon,\ell}_n} 
    & \lesssim 
    \sum_{j=0}^k \binom{k}{j} \frac{1}{n^k\epsilon^j} \int_{-\pi}^\pi \norm{F_x^{(k-j)}(e^{i\theta})} \, \abs{\psi_\ell^{(j)}(\theta/\epsilon)} \dd{\theta} 
    \\[2pt]
    & \lesssim 
    \frac{k!}{n^k} \int_{\abs{\theta}>\epsilon} \abs{\theta} m(\abs{\theta})^{k+1} \dd{\theta} 
    \\
    & \qquad +
    \sum_{j=1}^k \frac{k!}{j!} \frac{1}{n^k\epsilon^j} \norm{\psi_\ell^{(j)}}_{L^\infty(\R)} \int_{\epsilon<\abs{\theta}<2\epsilon} \abs{\theta} m(\abs{\theta})^{k-j+1} \dd{\theta} 
    \\[2pt]
    & \lesssim
    \frac{k!m(\epsilon)^{k+1}}{n^k} + 
    \sum_{j=1}^k \frac{k!}{j!} \frac{1}{n^k} \frac{\norm{\phi_\ell^{(j)}}_{L^\infty(\R)}}{\epsilon^{j-2}} m(\epsilon)^{k-j+1} 
    \\[2pt]
    & \lesssim
    \left( 1 + \epsilon + \frac{1}{2} \right) \frac{k!m(\epsilon)^{k+1}}{n^k} + 
    \sum_{j=3}^k \frac{k!}{j!} \frac{1}{n^k} \left( \frac{8\ell}{\epsilon} \right)^{j-2} m(\epsilon)^{k-j+1} 
    \\[2pt]
    & \lesssim
    \frac{k!m(\epsilon)^{k+1}}{n^k} + 
    \frac{m(\epsilon)}{n^2} \sum_{j=3}^k \frac{k!}{j!} \left( \frac{8\ell}{n\epsilon} \right)^{j-2} \left( \frac{m(\epsilon)}{n} \right)^{k-j}.
\end{align*}
Let $c \in (0,1)$. We now choose $\ell = k = k(\epsilon,n) = \lfloor cn/m(\epsilon) \rfloor$. Then, using Stirling's formula to observe that ${k! \lesssim (k/ce)^k}$ for $k\ge1$, we have
\begin{align*}
    \norm{x^{\epsilon,\ell}_n}
    & \lesssim
    m(\epsilon) \left( \frac{km(\epsilon)}{cen} \right)^k + 
    \frac{m(\epsilon)}{n^2} \sum_{j=3}^k \frac{k!}{j!} \left( \frac{8k}{n\epsilon} \right)^{j-2} \left( \frac{m(\epsilon)}{n} \right)^{k-j}
    \\[2pt]
    & \lesssim
    m(\epsilon) \exp\left( -\frac{cn}{m(\epsilon)} \right) + 
    \frac{m(\epsilon)}{n^2} \sum_{j=3}^k \frac{k!}{j!} \left( \frac{8c}{\epsilon m(\epsilon)} \right)^{j-2} \left( \frac{c}{k} \right)^{k-j}
    \\[2pt]
    & \lesssim
    m(\epsilon) \exp\left( -\frac{cn}{m(\epsilon)} \right) + 
    \frac{m(\epsilon)}{n^2} c^{k} \sum_{j=3}^k \left( \frac{8}{\epsilon m(\epsilon)} \right)^{j-2}.
\end{align*}
We now take $\epsilon= \epsilon(n) = m_{\log}^{-1}(cn)$. Then
\begin{equation*}
    m(\epsilon) \exp\left( -\frac{cn}{m(\epsilon)} \right)
    = 
    m(\epsilon) \exp\left( -\log\left( 1 + \frac{m(\epsilon)}{\epsilon} \right) \right)
    \leq
    \epsilon,
\end{equation*}
and
\begin{equation*}
    \frac{m(\epsilon)}{n^2} \lesssim \frac{1}{n}.
\end{equation*}
Hence
\begin{equation}\label{eq:xne bound}
    \norm{x^{\epsilon,\ell}_n} \lesssim \epsilon + \frac{c^{k}}{n} \sum_{j=3}^k \left( \frac{8}{\epsilon m(\epsilon)} \right)^{j-2}.
\end{equation}
Let $\delta\coloneqq \liminf_{\epsilon\to0+} \epsilon \,m(\epsilon)$. We distinguish several cases. Suppose first that $\delta >0$. Then $1/n = O(m_{\log}^{-1}(cn))$, and if $\delta > 8$, then \eqref{eq:mlog seq bound} follows from \eqref{eq:x_n-x_ne bound} and \eqref{eq:xne bound}. On the other hand, if $\delta \in(0,8]$ then we define $\widetilde{m}\colon (0,\pi] \to (0,\infty)$ by $\widetilde{m}(\epsilon) \coloneqq m(\delta\epsilon/9)$. We see that $\widetilde{m}$ is non-increasing with $\widetilde{m}(\epsilon) \geq m(\epsilon)$ and $\liminf_{\epsilon\to0+} \epsilon\, \widetilde{m}(\epsilon) = 9$. So we may replace $m$ by $\widetilde{m}$ in \eqref{eq:bdy func deriv bound}, and the previous estimates give $\norm{x_n} = O(\widetilde{m}_{\log}^{-1}(cn))$ as $n \to \infty$. Now, note that
\begin{equation*}
    \widetilde{m}_{\log}(\epsilon)
     =
    m(\delta\epsilon/9) \log\left( 1 + \frac{m\left( \delta\epsilon/9  \right)}{\epsilon} \right) 
    <
    m_{\log}(\delta\epsilon/9).
\end{equation*}
Hence $\widetilde{m}_{\log}^{-1}(cn) \lesssim m_{\log}^{-1}(cn)$, so we obtain \eqref{eq:mlog seq bound} whenever $\delta > 0$. Finally, suppose that $\delta = 0$.  Expanding $F_x$ as a Taylor series and using \eqref{eq:bdy func deriv bound} shows that $F_x$ extends holomorphically to the domain
\begin{equation*}
    \Omega_m \coloneqq \left\{\lambda \in \C : \abs{\lambda - e^{i\theta}} < \frac{1}{m(\abs{\theta})} \text{ for some $\theta\in[-\pi,\pi]\setminus\{0\}$} \right\}.
\end{equation*}
By composing with the Cayley transform and using the `edge-of-the-wedge theorem' (see \cite[Theorem~B]{Rud71_Edge_Wedge}), we deduce that $G_x$ extends holomorphically to $\E \cup \Omega_m$. The assumption that $\delta = 0$ implies that $1 \in \Omega_m$ and that $G_x$ has a holomorphic extension to a domain containing the entire unit circle $\T$. Let 
\begin{equation*}
    r_0 \coloneqq \inf\big\{ r \in (0,1] : \text{$G_x$ extends holomorphically to $\C \setminus r\Dbar$}\big\} .
\end{equation*}
Then, $r_0\in[0,1)$ and we may apply Cauchy's integral formula to the extension of $G_x$ to deduce that $\norm{x_n} = O(r^n)$ as $ n \to \infty$ for any $r>r_0$.
 We may find  $\theta_{0} \in [-\pi,\pi] \setminus \{0\}$ such that $m(\abs{\theta_{0}}) \geq c(1-r_0)^{-1}$. To see this, suppose instead that $m(\epsilon) < c(1-r_0)^{-1}$ for all $\epsilon \in (0,\pi]$. Then, using \eqref{eq:bdy func deriv bound}, we see that for every $\theta \in (-\pi,\pi] \setminus \{0\}$ the power series
\begin{equation*}
	\sum_{j=0}^\infty \frac{F_x^{(j)}(e^{i\theta}) }{j!}(\lambda - e^{i\theta})^j
\end{equation*}
converges absolutely whenever $\abs{\lambda - e^{i\theta}} < c^{-1}(1-r_0)$, and hence $G_x$ has a holomorphic extension to the set $\C \setminus (1 - c^{-1}(1-r_0)) \Dbar$. But $1 - c^{-1}(1-r_0) < r_0$,  contradicting the definition of $r_0$.
Hence, taking $\theta_0$ as above and using the fact that $m$ is non-increasing, we see that for all $\epsilon \in (0, \abs{\theta_0}]$,
\begin{equation*}
    m_{\log}(\epsilon) 
    \geq m(\abs{\theta_0}) \log\left( 1 + \frac{m(\abs{\theta_0})}{\epsilon} \right)
    \geq \frac{c}{1-r_0} \log\left( 1 + \frac{c}{(1-r_0) \epsilon} \right).
\end{equation*}
Taking $\epsilon = m_{\log}^{-1}(cn)$ for $n \in \N$ sufficiently large gives
\begin{equation*}
    cn \geq \frac{c}{1-r_0} \log\left( 1 + \frac{c}{(1-r_0) m_{\log}^{-1}(cn)} \right),
\end{equation*}
which rearranges to give
\begin{equation*}
    m_{\log}^{-1}(cn) \geq \frac{c}{(1-r_0)(e^{(1-r_0)n} - 1)}.
\end{equation*}
Since $r_0 \in [0,1)$ there exists $r\in(r_0, e^{-(1-r_0)})$, and we obtain
\begin{equation*}
    \norm{x_n} = O(r^n) = O(e^{-(1-r_0)n}) = O\big(m_{\log}^{-1}(cn)\big), \qquad n \to \infty,
\end{equation*}
as required. 
\end{proof}

As noted in \cite{Sei15_Tauberian}, we may obtain a quantified version of the Katznelson--Tzafriri theorem as a corollary to the above Tauberian theorem. Given a complex Banach space $X$ and a power-bounded operator $T \in \B(X)$, we consider the bounded sequence $x = (T^n(I-T))_{n\ge0}$  in $\B(X)$ and note that, for $\lambda \in \E$,
\begin{equation*}
	G_x(\lambda) = (I-T) R(\lambda,T) = I + (1-\lambda)R(\lambda,T).
\end{equation*} 
Applying Theorem~\ref{thm:disct taub mod} gives the following.

\begin{corollary}\label{cor:quantified KT}
Let $X$ be a complex Banach space and let $T \in \B(X)$ be a power-bounded operator with $\sigma(T) \cap \T \subseteq \{1\}$. If $m\colon (0,\pi] \to (0,\infty)$ is a continuous, non-increasing function such that
\begin{equation*}
    \norm{R(e^{i\theta}, T)} \leq m(\abs{\theta}), \qquad \abs{\theta} \in (0,\pi],
\end{equation*}
then, for every $c \in (0,1)$,
\begin{equation*}
    \norm{T^n(I-T)} = O\big(m_{\log}^{-1}(cn)\big), \qquad n \to \infty.
\end{equation*}
\end{corollary}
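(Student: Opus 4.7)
The plan is to apply Theorem~\ref{thm:disct taub mod} to the sequence $x_n \coloneqq T^n(I-T) \in \B(X)$. Power-boundedness of $T$ yields $x \in \ell^\infty(\Z_+;\B(X))$, the identity $\sum_{k=0}^n x_k = I - T^{n+1}$ establishes~\eqref{eq:partial sum bound}, and the formula $G_x(\lambda) = I + (1-\lambda)R(\lambda,T)$ combined with the spectral hypothesis $\sigma(T) \cap \T \subseteq \{1\}$ shows that $G_x$ extends holomorphically across $\T \setminus \{1\}$ with smooth boundary function $F_x(e^{i\theta}) = I + (1-e^{i\theta})R(e^{i\theta},T)$.

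The main task is to verify the derivative bound~\eqref{eq:bdy func deriv bound}. A short induction based on $\frac{d}{d\lambda}R(\lambda,T) = -R(\lambda,T)^2$ yields the closed-form identity $g^{(k)}(\lambda) = (-1)^k k!\, R(\lambda,T)^k g(\lambda)$ for $g(\lambda) \coloneqq I + (1-\lambda)R(\lambda,T)$. Combined with the elementary estimate $m(\abs{\theta}) \geq \abs{\theta}^{-1}$ (which holds whenever $1 \in \sigma(T)$, since $\mathrm{dist}(e^{i\theta},\sigma(T)) \le |e^{i\theta}-1| \le \abs{\theta}$; in the complementary case $\sigma(T)\cap\T = \emptyset$, one has $\norm{T^n} = O(r^n)$ for some $r<1$, which trivially satisfies the conclusion), this gives $\norm{g^{(k)}(e^{i\theta})} \leq 2 k!\,\abs{\theta}\,m(\abs{\theta})^{k+1}$. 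Applying Fa\`a di Bruno's formula to $F_x(\theta) = g(e^{i\theta})$ and controlling the resulting partial Bell polynomials by Stirling numbers of the second kind (since every derivative of $\theta \mapsto e^{i\theta}$ has modulus one) produces an estimate of the form
\begin{equation*}
    \norm{F_x^{(j)}(e^{i\theta})} \leq C\, j!\, \abs{\theta}\, (K m(\abs{\theta}))^{j+1}, \qquad j \geq 0, \ \abs{\theta} \in (0,\pi],
\end{equation*}
for some constants $C, K > 0$.

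Setting $\tilde m \coloneqq Km$ and applying Theorem~\ref{thm:disct taub mod} then gives $\norm{T^n(I-T)} = O(\tilde m_{\log}^{-1}(cn))$ for every $c \in (0,1)$. A straightforward comparison, using $\abs{\theta}\,m(\abs{\theta}) \geq 1$, yields $\tilde m_{\log}(\epsilon) \leq C_K\, m_{\log}(\epsilon)$ for a constant $C_K$ depending only on $K$, so that $\tilde m_{\log}^{-1}(cn) \leq m_{\log}^{-1}(cn/C_K)$, from which the conclusion of the corollary follows.

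The main obstacle is tracking the multiplicative constants $K$ and $C_K$ sufficiently carefully that the full range $c \in (0,1)$ is recovered, rather than merely a subinterval. A na\"ive Stirling bound on the Fubini-type sum $\sum_{l} S(j,l)\,l!\,m^l$ gives $K > 1$, hence $C_K > 1$, and the argument directly yields the rate only for $c \in (0, 1/C_K)$. The resolution is to exploit the fact that $\sum_{l} S(j,l)\,l!\,m^l \sim j!\,m^j$ as $m \to \infty$, so that $K$ can be taken arbitrarily close to $1$ in the asymptotic regime where $m(\abs{\theta})$ is large (equivalently, $\abs{\theta}$ is small); since $\tilde m_{\log}(\epsilon)/m_{\log}(\epsilon) \to K$ as $\epsilon \to 0^+$, applying the theorem with $K$ adapted to each target value of $c$ then closes the gap.
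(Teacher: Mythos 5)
Your overall route is exactly the intended one: apply Theorem~\ref{thm:disct taub mod} to $x_n=T^n(I-T)$, note the telescoping sum $\sum_{k=0}^n x_k=I-T^{n+1}$ and $G_x(\lambda)=I+(1-\lambda)R(\lambda,T)$, and use the resolvent identity $F_x^{(k)}(\lambda)=(-1)^k k!\,R(\lambda,T)^k F_x(\lambda)$ together with $\abs{\theta}\,m(\abs{\theta})\ge 1$ when $1\in\sigma(T)$. However, the Fa\`a di Bruno step, and the entire constant-tracking apparatus built on it, rests on reading $F_x^{(j)}$ as a derivative in the angular variable $\theta$. In this paper $F_x^{(j)}$ denotes the derivative with respect to the complex variable: this is what makes the integration-by-parts formula with exponents $e^{i(n+k-j+1)\theta}$ in the proof of Theorem~\ref{thm:disct taub mod} (and the explicit two-fold integration by parts in the proof of Proposition~\ref{prop:Ritt sequences}) correct, and it is what the Taylor-expansion step $\sum_{j\ge0} F_x^{(j)}(e^{i\theta})(\lambda-e^{i\theta})^j/j!$ in that proof requires. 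With this reading, your computation already yields \eqref{eq:bdy func deriv bound} with the same $m$ and $C=2$, and the full range $c\in(0,1)$ follows at once; no Bell polynomials, no inflated constant $K$, no limiting argument.

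Beyond being unnecessary, your patch for the constant problem has a genuine gap: hypothesis \eqref{eq:bdy func deriv bound} demands a single constant and a single function $m$ valid for \emph{all} $j\ge0$ (in the proof of Theorem~\ref{thm:disct taub mod} the order of differentiation $k(\epsilon,n)\to\infty$), whereas your claim that $K$ can be taken close to $1$ because $\sum_{l}S(j,l)\,l!\,m^l\sim j!\,m^j$ as $m\to\infty$ is a fixed-$j$ asymptotic. Uniformly in $j$ the Fubini-type sum is of order $j!\,\big(\log(1+1/m)\big)^{-(j+1)}$, and since $\big(\log(1+1/m)\big)^{-1}\le m+\tfrac12$, the correction is additive rather than a multiplicative factor tending to $1$; the factor $(1+\tfrac{1}{2m})^{j+1}$ is unbounded in $j$, so no admissible $K$ close to $1$ exists. (A repair along your lines is possible: verify \eqref{eq:bdy func deriv bound} with $\widetilde m=m+O(1)$, use $m(\epsilon)\ge1/\epsilon\to\infty$ to get $\widetilde m_{\log}(\epsilon)/m_{\log}(\epsilon)\to1$, and then run your $c<c'<1$ comparison \textemdash\ but that is a different argument from the one you state.) Separately, dismissing the case $\sigma(T)\cap\T=\emptyset$ as trivial is not justified: there the resolvent is bounded on $\T$, so $m$ may be bounded and $m_{\log}^{-1}(cn)$ is itself exponentially small, and $\norm{T^n(I-T)}=O(r^n)$ for $r$ slightly above the spectral radius $r_1$ does not obviously dominate it. One needs an argument of the type used in the $\delta=0$ branch of the proof of Theorem~\ref{thm:disct taub mod}: find $\theta_0\ne0$ with $m(\abs{\theta_0})\ge c(1-r_1)^{-1}$, deduce $m_{\log}^{-1}(cn)\gtrsim e^{-(1-r_1)n}$, and use $e^{-(1-r_1)}>r_1$ to choose $r\in(r_1,e^{-(1-r_1)})$.
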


Suppose $T$ and $m$ are as in Corollary~\ref{cor:quantified KT} and that $m(\epsilon) \lesssim \epsilon^{-\alpha}$ for some $\alpha \geq 1$. Then the conclusion becomes
\begin{equation}\label{eq:KT poly rate}
    \norm{T^n(I-T)} = O\bigg( \left( \frac{\log n}{n} \right)^{1/\alpha} \bigg), \qquad n \to \infty.
\end{equation}
It is further shown in \cite[Corollary~3.1]{Sei16_KT_Rates} that if $\liminf_{\abs{\theta} \to 0+} \abs{\theta}^{\alpha} \norm{R(e^{i\theta},T)} > 0$, then we obtain the lower bound
\begin{equation}\label{eq:KT lower}
    \norm{T^n(I-T)} \gtrsim n^{-1/\alpha}.
\end{equation}
One may ask whether it is always possible to remove the factor of $\log n$ in the estimate \eqref{eq:KT poly rate} and attain the lower bound \eqref{eq:KT lower}. This was answered in the negative in \cite[Theorem~3.6]{Sei16_KT_Rates} for $\alpha >2$, where a Banach space $X_\alpha$ and a power-bounded operator $T_\alpha \in \B(X_\alpha)$ were constructed such that $\sigma(T_\alpha)\cap\mathbb{T}\subseteq\{1\}$, $\norm{R(e^{i\theta},T)} = O(\abs{\theta}^{-\alpha})$ as $\abs{\theta} \to 0+$, and 
\begin{equation*}
    \limsup_{n\to\infty} \left( \frac{n}{\log n} \right)^{1/\alpha} \norm{T_\alpha^n (I - T_\alpha)} > 0.
\end{equation*}
Nevertheless, there are some situations in which the logarithmic factor may be removed and the lower bound in \eqref{eq:KT lower} is attained. This is the case when $X$ is a Hilbert space and the function $m$ is of the form $m(\epsilon)=C\epsilon^{-\alpha}$ for some $C>0$ and $\alpha\ge1$ (see \cite[Theorem~3.10]{Sei16_KT_Rates}), or more generally when $m$ has `reciprocally positive increase' (see  \cite[Theorem~2.3]{NgSei20_Optimal_KT}). The factor $\log n$ may also be removed for general Banach spaces when $\alpha = 1$. It is this class of operators we turn to now.  

\begin{definition}
Let $X$ be a complex Banach space. We say that $T \in \B(X)$ is a \emph{Ritt operator}  if $\sigma(T) \subseteq \Dbar$ and there exists $C > 0$ such that
\begin{equation}\label{eq:Ritt bound}
    \norm{R(\lambda,T)} \leq \frac{C}{\abs{\lambda - 1}},
    \qquad\lambda\in\E.
\end{equation}
\end{definition}

It was shown in \cite{Kom68_Ergodic} that if $T$ is a Ritt operator, then $T$ is power-bounded and satisfies
\begin{equation}\label{eq:operator Ritt rate}
    \|T^n(I-T)\|=O(n^{-1}),\qquad  n \to \infty.
\end{equation}
The converse to this statement was shown in \cite[Theorem~2.1]{Nev94_Power_bounded_resolvent}; see also \cite{Lyu99_Spectral_localization} and \cite{NagZem99_Resolvent_Condition}.

In fact, the implication that a power-bounded operator satisfying \eqref{eq:Ritt bound} must also satisfy \eqref{eq:operator Ritt rate} can be shown using the same approach as in Theorem~\ref{thm:disct taub mod} and Corollary~\ref{cor:quantified KT}. It is a consequence of the following Tauberian theorem.

\begin{proposition}\label{prop:Ritt sequences}
Let $X$ be a complex Banach space and suppose that $x \in \ell^\infty(\Z_+;X)$ satisfies \eqref{eq:partial sum bound}. Suppose further that $G_x$ admits a boundary function $F_x \in \linebreak {C^2(\T\setminus\{1\}; X)}$ and that there exists $C > 0$ such that 
\begin{equation}\label{eq:bdy func Ritt bound}
    \norm{F_x^{(j)}(\theta)} \leq \frac{C}{\abs{\theta}^{j}},\qquad \abs{\theta} \in (0,\pi],
\end{equation}
for  all $j \in \{0,1,2\}$. Then $\|x_n\|=O(n^{-1})$ as $n\to\infty$.
\end{proposition}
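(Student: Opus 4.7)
The plan is to adapt the Fourier convolution argument of Theorem~\ref{thm:disct taub mod} to the present setting, noting that only two derivatives of $F_x$ are available so there is no need for the full sequence $(\phi_\ell)_\ell$ of ever-smoother cutoffs. Concretely, I would fix a single cutoff $\phi \in C^2(\R)$ with $\phi(\theta) = 1$ for $\abs{\theta} \leq 1$, $\phi(\theta) = 0$ for $\abs{\theta} \geq 2$, and $\phi^{(j)}$ uniformly bounded for $j \leq 2$, for instance one of the $\phi_\ell$ given by Lemma~\ref{lem:step funcs conv} with $k_0 = 1$. Writing $\psi \coloneqq 1 - \phi$, define sequences $y^\epsilon, z^\epsilon \in \ell^1(\Z)$ as the Fourier coefficients of $\psi(\cdot/\epsilon)$ and $\phi(\cdot/\epsilon)$ respectively, extend $x$ by zero to negative indices, and set $x^\epsilon \coloneqq x * y^\epsilon \in \ell^\infty(\Z;X)$. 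The two summands in $\norm{x_n} \leq \norm{x_n - x^\epsilon_n} + \norm{x^\epsilon_n}$ are then estimated separately.

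For the first summand, I would argue exactly as in the proof of Theorem~\ref{thm:disct taub mod}: write $x_n - x^\epsilon_n = (x * z^\epsilon)_n$ as a sum over $j \geq 0$ involving the partial sums $s_j \coloneqq \sum_{k=0}^j x_k$ and the differences $z^\epsilon_{n-j} - z^\epsilon_{n-j-1}$, and combine the pointwise bounds $\abs{z^\epsilon_n - z^\epsilon_{n-1}} \lesssim \epsilon^2$ (by direct estimation using the support of $\phi(\cdot/\epsilon)$) and $\abs{z^\epsilon_n - z^\epsilon_{n-1}} \lesssim n^{-2}$ for $n \neq 0$ (from two integrations by parts, permitted since $\phi \in C^2$). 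Together with \eqref{eq:partial sum bound}, this yields $\norm{x_n - x^\epsilon_n} \lesssim \epsilon$.

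For the second summand, the boundary function definition gives, as in the proof of Theorem~\ref{thm:disct taub mod},
\begin{equation*}
    x^\epsilon_n = \frac{1}{2\pi} \int_{-\pi}^\pi e^{i(n+1)\theta} F_x(e^{i\theta}) \psi(\theta/\epsilon) \dd{\theta}, \qquad n \geq 0.
\end{equation*}
I would then integrate by parts exactly twice (which is all the smoothness of $F_x$ permits) to express $x^\epsilon_n$ as a sum of three terms of the form $(A_{n,2,j}/\epsilon^j)\int_{-\pi}^\pi e^{i\cdot\theta} F_x^{(2-j)}(e^{i\theta}) \psi^{(j)}(\theta/\epsilon)\dd{\theta}$ with $\abs{A_{n,2,j}} \leq \binom{2}{j}/n^2$. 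Inserting \eqref{eq:bdy func Ritt bound} and using that $\phi^{(j)}(\cdot/\epsilon)$ is supported in $\epsilon \leq \abs{\theta} \leq 2\epsilon$ for $j \geq 1$, while for $j = 0$ the integrand is supported in $\abs{\theta} \geq \epsilon$, each of the three terms contributes $O(1/(n^2\epsilon))$, so $\norm{x^\epsilon_n} \lesssim 1/(n^2\epsilon)$. Choosing $\epsilon = 1/n$ balances the two estimates and gives $\norm{x_n} = O(n^{-1})$.

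There is no real obstacle here: the argument is a strictly simpler variant of the proof of Theorem~\ref{thm:disct taub mod}, in which neither the auxiliary smoothness parameter $\ell$ nor the optimisation of the number of integrations by parts is needed. The one calculation worth highlighting is the precise matching between the decay of $F_x^{(j)}$ supplied by \eqref{eq:bdy func Ritt bound} and the size and support of the derivatives $\phi^{(j)}(\cdot/\epsilon)$, which is what makes all three terms balance to the common rate $1/(n^2\epsilon)$.
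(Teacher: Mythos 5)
Your proposal is correct and follows essentially the same route as the paper's proof: the same convolution decomposition, the bound $\norm{x_n-x^\epsilon_n}\lesssim\epsilon$, two integrations by parts against the boundary function, the term-by-term estimate $\lesssim 1/(n^2\epsilon)$, and the choice $\epsilon=1/n$. The only cosmetic difference is that the paper takes a fixed $\phi\in C^\infty(\R)$ rather than a $C^2$ cutoff from Lemma~\ref{lem:step funcs conv}, which changes nothing in the argument.
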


\begin{proof}
Choose $\phi \in C^\infty(\R)$ such that $\phi(\theta) = 1$ for $\abs{\theta} < 1$ and $\phi(\theta) = 0$ and $\abs{\theta} > 2$. Similarly as in the proof of Theorem~\ref{thm:disct taub mod}, for $\epsilon \in (0,\pi/2]$ and $n \ge0$, we define $y_n^\epsilon \in X$ as
\begin{equation*}
    y^\epsilon_n \coloneqq \frac{1}{2\pi} \int_{-\pi}^\pi e^{in\theta} \psi(\theta/\epsilon) \dd{\theta},
\end{equation*}
where $\psi(\theta) \coloneqq 1 - \phi(\theta)$. Then $y^\epsilon \in \ell^1(\Z)$ and we define $x^\epsilon \coloneqq x * y^\epsilon$, where we have extended $x$ to a sequence in $\ell^\infty(\Z;X)$ by setting $x_n = 0$ for $n < 0$. As before, we now have $    \sup_{n \ge0} \norm{x_n - x^\epsilon_n} \lesssim \epsilon$, 
and 
\begin{equation*}
    x^\epsilon_n = \frac{1}{2\pi} \int_{-\pi}^\pi e^{i(n+1) \theta} F_x(e^{i\theta}) \psi(\theta/\epsilon) \dd{\theta}.
\end{equation*}
Integrating by parts twice gives
\begin{align*}
    x^\epsilon_n 
    &= 
    \frac{1}{2\pi}\left[ 
    \frac{1}{(n+1)(n+2)} \int_{-\pi}^\pi e^{i(n+3)\theta} F_x''(e^{i\theta}) \psi(\theta/\epsilon) \dd{\theta}
    \right.
    \\
    & \qquad - 
    \frac{i}{\epsilon}\left( \frac{1}{(n+1)^2} + \frac{1}{(n+1)(n+2)} \right) \int_{-\pi}^\pi e^{i(n+2)\theta} F_x'(e^{i\theta}) \psi'(\theta/\epsilon) \dd{\theta} 
    \\
    & \qquad -
    \left.
    \frac{1}{\epsilon^2(n+1)^2} \int_{-\pi}^\pi e^{i(n+1)\theta} F_x(e^{i\theta}) \psi''(\theta/\epsilon) \dd{\theta}
    \right],
\end{align*}
and applying \eqref{eq:bdy func Ritt bound} gives
\begin{align*}
    \norm{x^\epsilon_n} 
    & \lesssim
    \frac{1}{n^2} \left[
    \int_{\epsilon < \abs{\theta} < \pi} \abs{\theta}^{-2} \dd{\theta}
    + \frac{1}{\epsilon} \int_{\epsilon < \abs{\theta} < 2\epsilon} \abs{\theta}^{-1} \dd{\theta}
    +
    \frac{1}{\epsilon^2} \int_{\epsilon < \abs{\theta} < 2\epsilon} \dd{\theta}
    \right]
    \lesssim
    \frac{1}{n^2 \epsilon}.
\end{align*}
The result follows on setting $\epsilon = n^{-1}$.
\end{proof}

Since Ritt operators are power-bounded, we recover the following result.

\begin{corollary}
Let $X$ be a complex Banach space and let $T \in \B(X)$ be a Ritt operator. Then 
$    \norm{T^n(I-T)} = O(n^{-1})$ as $ n \to \infty$.
\end{corollary}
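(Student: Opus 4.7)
The plan is to deduce the corollary directly from Proposition~\ref{prop:Ritt sequences} applied to the $\B(X)$-valued sequence $x = (T^n(I-T))_{n\ge0}$. The corollary stated just above Corollary~\ref{cor:quantified KT} already records the formula $G_x(\lambda) = I + (1-\lambda)R(\lambda,T)$, so the natural candidate for the boundary function is $F_x(e^{i\theta}) = I + (1-e^{i\theta})R(e^{i\theta},T)$ for $\theta \in [-\pi,\pi]\setminus\{0\}$, which is well defined since the Ritt condition~\eqref{eq:Ritt bound}, combined with continuity of the resolvent, forces $\sigma(T)\cap\T \subseteq \{1\}$.

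The first task is to verify the hypotheses of Proposition~\ref{prop:Ritt sequences}. Power-boundedness of $T$ (Komatsu, cited above) gives $x \in \ell^\infty(\Z_+;\B(X))$. The partial sum identity $\sum_{k=0}^n T^k(I-T) = I - T^{n+1}$ together with power-boundedness yields \eqref{eq:partial sum bound}. That $F_x$ is a boundary function for $G_x$ follows from the fact that $R(\cdot,T)$ is holomorphic on $\C\setminus\sigma(T)$, which contains a neighbourhood of every point of $\T\setminus\{1\}$, and from a routine application of the dominated convergence theorem using the Ritt bound as $r\to1+$.

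The substantive step, and the only place where real work is needed, is to verify the derivative bound~\eqref{eq:bdy func Ritt bound} for $j=0,1,2$. The idea is to promote the Ritt bound on $\E$ into a bound on a disc around $e^{i\theta}$ of radius comparable to $\abs\theta$. Concretely, for $\lambda_0 = (1+r)e^{i\theta}$ with $r>0$ small, \eqref{eq:Ritt bound} gives $\norm{R(\lambda_0,T)} \lesssim \abs\theta^{-1}$, and the Neumann series then shows that $R(\cdot,T)$ extends holomorphically to a disc around $\lambda_0$ of radius $\sim \abs\theta$ on which it is bounded by a constant multiple of $\abs\theta^{-1}$. Letting $r\to0+$ and applying Cauchy's integral formula on a suitable circle about $e^{i\theta}$ yields
\begin{equation*}
    \norm{R^{(j)}(e^{i\theta},T)} \lesssim \frac{1}{\abs\theta^{j+1}}, \qquad j \ge 0,
\end{equation*}
where $R^{(j)}$ denotes differentiation in $\lambda$. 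Combining this with the chain rule and the Leibniz formula applied to $F_x(e^{i\theta}) = I + (1-e^{i\theta})R(e^{i\theta},T)$, and using the elementary estimate $\abs{1-e^{i\theta}} \lesssim \abs\theta$, gives $\norm{F_x^{(j)}(e^{i\theta})} \lesssim \abs\theta^{-j}$ for $j \in \{0,1,2\}$ (the factor of $\abs\theta$ from $1-e^{i\theta}$ precisely offsets one power of $\abs\theta^{-1}$ from the resolvent derivatives).

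With all hypotheses in place, Proposition~\ref{prop:Ritt sequences} yields $\norm{T^n(I-T)} = O(n^{-1})$ as $n\to\infty$. I expect the main obstacle to be bookkeeping in the third step: ensuring that the radius of holomorphy of the Neumann extension really is a uniform constant multiple of $\abs\theta$ (independent of $\theta$ near $0$), and then controlling each term in the Leibniz expansion of $F_x^{(j)}$; none of this is deep, but it is the only part that does not reduce immediately to the quoted results.
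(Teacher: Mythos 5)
Your proposal is correct and takes essentially the same route as the paper: the corollary is obtained by applying Proposition~\ref{prop:Ritt sequences} to the sequence $x=(T^n(I-T))_{n\ge0}$, using power-boundedness of Ritt operators, the telescoping identity $\sum_{k=0}^n T^k(I-T)=I-T^{n+1}$ for \eqref{eq:partial sum bound}, and the Ritt resolvent condition \eqref{eq:Ritt bound} to verify the boundary-function bounds \eqref{eq:bdy func Ritt bound}. The only (harmless) cosmetic difference is in how you obtain the derivative bounds, via a Neumann-series/Cauchy estimate rather than, say, the identity $R^{(j)}(\lambda,T)=(-1)^j j!\,R(\lambda,T)^{j+1}$ combined with $\norm{R(e^{i\theta},T)}\lesssim\abs{\theta}^{-1}$; the paper leaves this verification implicit in any case.
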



\section{Tauberian Theorems with Several Boundary Singularities}
\label{sec:several points}

The previous results have assumed that, given a complex Banach space $X$ and a bounded sequence $x \in \ell^\infty(\Z_+;X)$ satisfying \eqref{eq:partial sum bound}, the boundary function $F_x$ has at most one singularity on $\T$, specifically at the point $1$. In fact, the approach used in the proof of Theorem~\ref{thm:disct taub single} can be easily generalised to the case where, given a finite subset $E $ of $ \T$, we have $F_x \in L^1_{\loc}(\T \setminus E; X)$ or $F_x \in C^k(\T \setminus E; X)$ for some $k \in \N \cup \{\infty\}$. Given a finite set $E\subseteq\mathbb{T}$, we let
\begin{equation*}
    C_E(\T) \coloneqq \{ \psi \in C(\T) : \supp \psi \subseteq \T \setminus E\}.
\end{equation*}

\begin{definition}
Let $X$ be a complex Banach space, $G\colon \E \to X$ a holomorphic function and $E $ a finite subset of $ \T$. We say that $F \in L^1_{\loc}(\T \setminus E; X)$ is an \emph{$E$-boundary function} for $G$ if, for all $\psi \in C_E(\T)$,
\begin{equation*}
    \lim_{r \to 1+} \int_{-\pi}^\pi \psi(e^{i\theta}) G(re^{i\theta}) \dd\theta
    = \int_{-\pi}^\pi \psi(e^{i\theta}) F(e^{i\theta}) \dd\theta.
\end{equation*}
\end{definition}

Given $\ell, N \in \N$ and a finite set $E = \{e^{i\theta_1}, \ldots, e^{i\theta_N}\} \subseteq \T$, we can use Lemma~\ref{lem:step funcs conv} to construct functions $2\pi$-periodic functions $\phi_{\ell,E}$ with all of the following properties:
\begin{enumerate}[(i)]
    \item There exists $\delta > 0$ such that $\phi_{\ell,E}(\theta_p + 2k\pi - \theta) = 1$ whenever $\abs{\theta} \leq \delta$, $p \in \{1,\ldots,N\}$ and $k \in \Z$.
    
    \item There exists $\delta > 0$ such that $\phi_{\ell,E}(\theta_p + 2k\pi - \theta) = 0$ whenever $\abs{\theta} \geq 2\delta$, $p \in \{1,\ldots,N\}$ and $k \in \Z$.
    
    \item There exists $\delta > 0$ such that $\phi_{\ell,E}(\theta_p + 2k\pi - \theta) \in (0,1)$ whenever $\delta < \abs{\theta} < 2\delta$, $p \in \{1,\ldots,N\}$ and $k \in \Z$.
    
    \item There exists $C>0$ such that $\max_{1 \leq j \leq 2} \norm{\phi_{\ell,E}^{(j)}}_{L^\infty(\R)} \leq C$.
    
    \item There exists $C>0$ such that $\norm{\phi_{\ell,E}^{(j)}}_{L^\infty(\R)} \leq C (8 \ell)^{j-2}$ for $2 < j \leq \ell + 1$.
\end{enumerate}
The following generalisation of Theorem~\ref{thm:disct taub single} is obtained by replacing the bump functions $\phi_\ell \in C^{\ell+1}([-\pi,\pi])$ used in the proof of Theorem~\ref{thm:disct taub mod} with the functions $\phi_{\ell,E}$ restricted to the interval $[-\pi,\pi]$.

\begin{theorem}\label{thm:multi Tauberian}
Let $X$ be a complex Banach space, $N \in \N$ and $E = \{e^{i\theta_1}, \ldots, e^{i\theta_N}\}\subseteq\T$. Suppose that $x \in \ell^\infty(\Z_+; X)$ is such that
\begin{equation}\label{eq:multi partial sum bound}
    \sup_{n \geq 0} \, \bigg\|{\sum_{k=0}^n e^{-ik\theta_p} x_k}\bigg\| < \infty
\end{equation}
for all $p \in \{1, \ldots, N\}$.  If $G_x$ admits an $E$-boundary function $F_x \in L^1_{\loc}(\T \setminus E; X)$, then $x \in c_0(\Z_+; X)$.

Moreover, given $\delta > 0$ and a continuous non-increasing function $m\colon (0,\delta] \to (0,\infty)$, the following hold:
\begin{enumerate}
    \item[\textup{(i)}]\label{pt:finite deriv} Let $k \in \N$ and suppose that $F_x \in C^k(\T \setminus E; X)$ and that there exists  $C>0$ such that  
    \begin{equation*}
        \norm{F_x^{(j)}(e^{i(\theta - \theta_p)})} \leq C \abs{\theta}^{k - j} m(\abs{\theta})^{k+1},\qquad \abs{\theta} \in (0, \delta],
    \end{equation*}
for all $p \in \{1, \ldots, N\}$ and $0 \leq j \leq k$.    Then, for every $c > 0$,
    \begin{equation*}
        \norm{x_n} = O\big(m_k^{-1}(cn)\big), \qquad n \to \infty.
    \end{equation*}
    
    \item[\textup{(ii)}]\label{pt:smooth} Suppose that $F_x \in C^\infty(\T \setminus E; X)$ and that there exists  $C>0$ such that
    \begin{equation*}
        \norm{F_x^{(j)}(e^{i(\theta-\theta_p)})} \leq C j! \abs{\theta} m(\abs{\theta})^{j+1},\qquad \abs{\theta} \in (0, \delta],
    \end{equation*}
    for all $p \in \{1, \ldots, N\}$ and $ j \geq 0$.
    Then, for every $c \in (0,1)$,
    \begin{equation*}
        \norm{x_n} = O\big(m_{\log}^{-1}(cn)\big), \qquad n \to \infty.
    \end{equation*}
\end{enumerate}
\end{theorem}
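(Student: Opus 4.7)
The plan is to mirror the proofs of Theorems~\ref{thm:disct taub single} and~\ref{thm:disct taub mod}, using the multi-point bump function $\phi_{\ell,E}$ constructed above in place of the single-point $\phi_\ell(\cdot/\epsilon)$, with the bump half-width parameter chosen proportional to $\epsilon$. For each $\epsilon\in(0,\pi/2]$ and $\ell\ge1$, I would set $\psi_{\ell,E}\coloneqq 1-\phi_{\ell,E}$ and define the $\ell^1(\Z)$-sequences
\begin{equation*}
    y_n^{\epsilon,\ell} \coloneqq \frac{1}{2\pi}\int_{-\pi}^\pi e^{in\theta}\psi_{\ell,E}(\theta)\dd{\theta}, \qquad z_n^{\epsilon,\ell}\coloneqq \frac{1}{2\pi}\int_{-\pi}^\pi e^{in\theta}\phi_{\ell,E}(\theta)\dd{\theta}.
\end{equation*}
After extending $x$ by zero on negative indices, I would take $x^{\epsilon,\ell}\coloneqq x*y^{\epsilon,\ell}$ and estimate $\|x_n-x^{\epsilon,\ell}_n\|$ and $\|x^{\epsilon,\ell}_n\|$ separately, before optimising $\ell$ and $\epsilon$ in terms of $n$.

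The key observation for handling several singularities is that $\phi_{\ell,E}$ decomposes as a sum of $N$ shifted copies of a single-point bump, one centred at each $\theta_p$, which gives the Fourier-side identity $z^{\epsilon,\ell}_n = \sum_{p=1}^N e^{in\theta_p}\tilde{z}^{\epsilon,\ell}_n$, where $\tilde{z}^{\epsilon,\ell}_n$ is the Fourier coefficient of the single-point bump $\phi_\ell(\theta/\epsilon)$ from the proof of Theorem~\ref{thm:disct taub mod}. For each $p$ I would then perform Abel summation using the twisted partial sums $s_j^p\coloneqq\sum_{k=0}^j e^{-ik\theta_p}x_k$, which are uniformly bounded by~\eqref{eq:multi partial sum bound}. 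Each contribution to $(x*z^{\epsilon,\ell})_n$ is then structurally identical to the sum appearing in the single-point proof, so the bounds on $|\tilde{z}^{\epsilon,\ell}_n-\tilde{z}^{\epsilon,\ell}_{n-1}|$ derived there yield $\|x_n-x^{\epsilon,\ell}_n\|\lesssim\epsilon$ uniformly in $n\ge0$, with implicit constants depending only on $N$.

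For $\|x^{\epsilon,\ell}_n\|$, the defining property of the $E$-boundary function gives $x^{\epsilon,\ell}_n = \frac{1}{2\pi}\int_{-\pi}^\pi e^{i(n+1)\theta}F_x(e^{i\theta})\psi_{\ell,E}(\theta)\dd{\theta}$, and I would integrate by parts $k$ times as in the single-point case. Since $\psi_{\ell,E}$ equals $1$ away from $O(\epsilon)$-neighbourhoods of the $\theta_p$, the derivatives $\psi_{\ell,E}^{(j)}$ for $j\ge1$ are supported in $N$ disjoint transition regions. Splitting the integral accordingly and substituting $\theta\mapsto\theta+\theta_p$ in each piece reduces the problem to exactly the single-point integral bounded in Theorem~\ref{thm:disct taub mod}, with the hypotheses on $F_x^{(j)}$ centred at each $\theta_p$ and the derivative bounds~(iv)--(v) on $\phi_{\ell,E}$ serving as input. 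Optimising $k$, $\ell$, and $\epsilon$ exactly as in the single-point case then yields both conclusions~(i) and~(ii). For the unquantified claim $x\in c_0(\Z_+;X)$, I would instead take a fixed $\phi\in C^2$ and combine the $O(\epsilon)$ bound with the Riemann--Lebesgue lemma applied to the fixed-$\epsilon$ sequence $x^{\epsilon}_n$, which forces $\limsup_{n\to\infty}\|x_n\|\lesssim\epsilon$ for every $\epsilon$.

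The main technical obstacle is setting up the phase-shifted decomposition of $z^{\epsilon,\ell}$ and pushing the Abel summation argument through singularity-by-singularity; once this is in place, the remaining estimates are essentially direct translations of those in Theorems~\ref{thm:disct taub single} and~\ref{thm:disct taub mod}, contributing only $N$-dependent constants to the implicit prefactors.
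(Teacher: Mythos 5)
Your proposal is essentially the paper's own argument: the paper proves Theorem~\ref{thm:multi Tauberian} precisely by substituting the functions $\phi_{\ell,E}$ (with transition width proportional to $\epsilon$) for the single-point bumps in the proof of Theorem~\ref{thm:disct taub mod}, and your phase-shifted decomposition of $z^{\epsilon,\ell}$ together with Abel summation against the twisted partial sums $s_j^p$ is exactly how hypothesis~\eqref{eq:multi partial sum bound} is meant to enter. The localisation of the integration-by-parts estimates to the transition regions around each $\theta_p$ likewise matches the intended adaptation, so your write-up supplies correctly the details the paper leaves implicit.
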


Similarly to Corollary~\ref{cor:quantified KT}, applying this Tauberian theorem to the sequence $(T^n \prod_{\xi\in E} (\xi-T))_{n\ge0}$ where $T$ is a power-bounded operator gives the following generalisation of the quantified Katznelson--Tzafriri theorem.

\begin{corollary}
Let $X$ be a complex Banach space, $N \in \N$, $E = \{e^{i\theta_1}, \ldots, e^{i\theta_N}\}\subseteq\T$, and let $T \in \B(X)$ be a power-bounded operator with $\sigma(T) \cap \T \subseteq E$. If $\delta > 0$ and $m\colon (0,\delta] \to (0,\infty)$ is a continuous, non-increasing function such that
\begin{equation*}
    \norm{R(e^{i(\theta - \theta_p)}, T)} \leq m(\abs{\theta}), \qquad \abs{\theta} \in (0,\delta], 
\end{equation*}
for all $p \in \{1, \ldots, N\}$, then, for every $c \in (0,1)$,
\begin{equation*}
    \bigg\|{T^n \prod_{p=1}^N (e^{i\theta_p} - T)}\bigg\| = O\big(m_{\log}^{-1}(cn)\big), \qquad n \to \infty.
\end{equation*}
\end{corollary}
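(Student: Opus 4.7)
The plan is to apply Theorem~\ref{thm:multi Tauberian}(ii) to the $\B(X)$-valued sequence $x = (x_n)_{n \ge 0}$ defined by $x_n \coloneqq T^n \prod_{p=1}^N (e^{i\theta_p} - T)$, following the pattern by which Corollary~\ref{cor:quantified KT} was derived from Theorem~\ref{thm:disct taub mod}. Power-boundedness of $T$ immediately gives $x \in \ell^\infty(\Z_+;\B(X))$. To verify the partial-sum condition~\eqref{eq:multi partial sum bound}, I would fix $p \in \{1,\ldots,N\}$, factor $(e^{i\theta_p}-T) = e^{i\theta_p}(I - e^{-i\theta_p}T)$ out of the product, and apply the telescoping identity $(I - e^{-i\theta_p}T)\sum_{k=0}^n(e^{-i\theta_p}T)^k = I - (e^{-i\theta_p}T)^{n+1}$ to obtain
$$\sum_{k=0}^n e^{-ik\theta_p} x_k = e^{i\theta_p}\bigl(I - (e^{-i\theta_p}T)^{n+1}\bigr) \prod_{q \neq p}(e^{i\theta_q} - T),$$
which is uniformly bounded in $n$ by power-boundedness.

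A termwise computation yields $G_x(\lambda) = R(\lambda,T)\prod_{p=1}^N(e^{i\theta_p}-T)$ on $\E$. Since $\sigma(T)\cap\T\subseteq E$, this extends holomorphically across $\T\setminus E$, so the $E$-boundary function is $F_x(e^{i\theta}) = R(e^{i\theta},T)\prod_{p=1}^N(e^{i\theta_p}-T)$ for $e^{i\theta}\in\T\setminus E$.

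The main step is the derivative bound required by Theorem~\ref{thm:multi Tauberian}(ii). Setting $Q_p \coloneqq \prod_{q\neq p}(e^{i\theta_q}-T)$, the key algebraic identity
$$R(\lambda,T)(e^{i\theta_p}-T) = I + (e^{i\theta_p}-\lambda)R(\lambda,T)$$
gives $G_x(\lambda) = \bigl(I + (e^{i\theta_p}-\lambda)R(\lambda,T)\bigr)Q_p$. Combining the Leibniz rule with the standard identity $\frac{d^j}{d\lambda^j}R(\lambda,T) = (-1)^j j!\,R(\lambda,T)^{j+1}$ then yields, for $j\ge 1$,
$$G_x^{(j)}(\lambda) = (-1)^j j!\bigl[(e^{i\theta_p}-\lambda)R(\lambda,T)^{j+1} + R(\lambda,T)^j\bigr]Q_p.$$
Evaluating at a point $\lambda$ on the unit circle at local distance $|\theta|$ from $e^{i\theta_p}$ and combining the hypothesis $\|R(\lambda,T)\| \le m(|\theta|)$ with the estimate $|e^{i\theta_p}-\lambda| \le |\theta|$ and the lower bound $|\theta|\,m(|\theta|)\gtrsim 1$ (which is automatic in the non-trivial case $e^{i\theta_p}\in\sigma(T)$ via $\|R(\lambda,T)\|\ge|e^{i\theta_p}-\lambda|^{-1}$; if instead $\sigma(T)\cap\T=\emptyset$, then $T$ has spectral radius less than one and the claim is trivial), I obtain
$$\|F_x^{(j)}\| \lesssim j!\,|\theta|\,m(|\theta|)^{j+1}$$
uniformly in $j\ge 0$. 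Theorem~\ref{thm:multi Tauberian}(ii) then delivers the claimed bound. The main subtlety is securing a constant independent of $j$ in the derivative bound; this is precisely why I exploit the algebraic resolvent identity $R^{(j)} = (-1)^j j!\,R^{j+1}$ rather than a Cauchy-type estimate on discs of radius $\sim 1/m(|\theta|)$, since the latter approach would introduce an unwanted factor of $C^j$ that only partially absorbs into $m$.
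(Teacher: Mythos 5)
Your proposal is correct and is essentially the paper's intended argument: the paper derives this corollary exactly as you do, by applying Theorem~\ref{thm:multi Tauberian}(ii) to $x_n = T^n\prod_{p=1}^N(e^{i\theta_p}-T)$, with the boundedness of the rotated partial sums coming from telescoping and the derivative bounds from the identity $R(\lambda,T)(e^{i\theta_p}-T) = I + (e^{i\theta_p}-\lambda)R(\lambda,T)$ together with $R^{(j)}(\lambda,T) = (-1)^j j!\,R(\lambda,T)^{j+1}$, just as in the passage from Theorem~\ref{thm:disct taub mod} to Corollary~\ref{cor:quantified KT}. The only slight imprecision is your case split for the lower bound $|\theta|\,m(|\theta|)\gtrsim 1$: the case $e^{i\theta_p}\notin\sigma(T)$ but $\sigma(T)\cap\T\neq\emptyset$ is not covered as stated, though it is handled by the same observation applied to any $e^{i\theta_q}\in\sigma(T)\cap E$, since $m$ is a common majorant of the resolvent near every point of $E$ and hence still satisfies $m(|\theta|)\geq |\theta|^{-1}$.
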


We now turn to operators satisfying a generalisation of the Ritt condition~\eqref{eq:Ritt bound}.

\begin{definition}[{\cite[Definition 2.1]{BouMer22}}]\label{def:ritt defns}
Let $X$ be a complex Banach space, $T \in \B(X)$ and $E$ a finite subset of $ \T$. We say $T$ is an \emph{$E$-Ritt operator} if $\sigma(T) \subseteq \Dbar$ and there exists $C > 0$ such that
\begin{equation*}
    \norm{R(\lambda,T)}
    \leq 
    \max\left\{ \frac{C}{\abs{ \lambda-\xi}} : \xi \in E \right\},
    \qquad \lambda\in\E.
\end{equation*}
\end{definition}

It is shown in \cite[Theorem~2.9]{BouMer22} that $E$-Ritt operators have a classification analogous to that of the classical Ritt operators. Specifically, for a complex Banach space $X$ and a finite subset $E $ of $ \T$, the operator $T \in \B(X)$ is $E$-Ritt if and only if $T$ is power-bounded and 
\begin{equation}\label{eq:E-ritt rate}
    \bigg\|{T^n \prod_{\xi \in E} (\xi - T)}\bigg\| = O(n^{-1}), \qquad n \to \infty.
\end{equation}
Using the same approach as in Proposition~\ref{prop:Ritt sequences}, we obtain the following Tauberian theorem for sequences with a boundary function satisfying a condition similar to the resolvent condition for an $E$-Ritt operator.

\begin{proposition}
Let $X$ be a complex Banach space, $N \in \N$, $E = \{e^{i\theta_1}, \dots, e^{i\theta_N}\}\subseteq\T $, and suppose that $x \in \ell^\infty(\Z_+; X)$ satisfies \eqref{eq:multi partial sum bound}. Suppose further that $G_x$ admits an $E$-boundary function $F_x \in C^2(\T \setminus E; X)$, and that there exist $C,\delta > 0$ such that
\begin{equation*}
     \norm{F_x^{(j)}(e^{i(\theta - \theta_p)})} \le \frac{C}{\abs{\theta - \theta_p}^j},\qquad \abs{\theta} \in (0,\delta],
\end{equation*}
for all  $p \in \{1, \ldots, N\}$ and $j \in \{0,1,2\}$. Then $\|x_n\|=O(n^{-1})$ as $n\to\infty$.
\end{proposition}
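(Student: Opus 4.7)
The plan is to mimic the proof of Proposition~\ref{prop:Ritt sequences}, but with bumps placed at each singularity $e^{i\theta_p}$ rather than at $1$ alone. Choose $\phi\in C^\infty(\R)$ with $\phi(\theta)=1$ for $\abs{\theta}<1$ and $\phi(\theta)=0$ for $\abs{\theta}>2$, and for $\epsilon\in(0,\delta/2]$ small enough that the bumps about distinct $\theta_p$ are pairwise disjoint modulo $2\pi$, define the $2\pi$-periodic smooth function
\begin{equation*}
    \Phi_\epsilon(\theta) \coloneqq \sum_{p=1}^N\sum_{k\in\Z}\phi\!\left(\frac{\theta-\theta_p-2k\pi}{\epsilon}\right),\qquad \Psi_\epsilon \coloneqq 1 - \Phi_\epsilon.
\end{equation*}
Set $y^\epsilon_n \coloneqq \frac{1}{2\pi}\int_{-\pi}^\pi e^{in\theta}\Psi_\epsilon(\theta)\dd{\theta}$ and, after extending $x$ to $\Z$ by zero, let $x^\epsilon\coloneqq x*y^\epsilon$. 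The goal is to establish the two estimates $\norm{x_n-x^\epsilon_n}\lesssim\epsilon$ and $\norm{x^\epsilon_n}\lesssim 1/(n^2\epsilon)$; the choice $\epsilon=1/n$ then yields the conclusion.

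For the approximation error, write $x_n - x^\epsilon_n = (x*z^\epsilon)_n$ with $z^\epsilon_n\coloneqq\frac{1}{2\pi}\int_{-\pi}^\pi e^{in\theta}\Phi_\epsilon(\theta)\dd{\theta}$. Unfolding the sum over $k$ by a translation argument yields the crucial factorisation $z^\epsilon_n = \sum_{p=1}^N e^{in\theta_p}w^\epsilon_n$, where $w^\epsilon_n\coloneqq\frac{\epsilon}{2\pi}\int_\R e^{in\epsilon s}\phi(s)\dd{s}$ is, up to a factor of $\epsilon$, the Fourier transform of the fixed Schwartz function $\phi$ sampled at $n\epsilon$. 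A straightforward Riemann-sum bound then gives $\sum_{m\in\Z}\abs{w^\epsilon_m-w^\epsilon_{m-1}}\lesssim\epsilon$. Summing by parts in $j$ inside each expression $\sum_{j\ge0}(e^{-ij\theta_p}x_j)w^\epsilon_{n-j}$ brings in the bounded modified partial sums from~\eqref{eq:multi partial sum bound} and produces the required estimate $\norm{x_n-x^\epsilon_n}\lesssim\epsilon$ uniformly in $n\ge0$.

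The main term is handled exactly as in Proposition~\ref{prop:Ritt sequences}. Fubini's theorem together with the defining property of the $E$-boundary function gives
\begin{equation*}
    x^\epsilon_n=\frac{1}{2\pi}\int_{-\pi}^\pi e^{i(n+1)\theta}F_x(e^{i\theta})\Psi_\epsilon(\theta)\dd{\theta},\qquad n\ge0.
\end{equation*}
Since $\Psi_\epsilon$ vanishes in an $\epsilon$-neighbourhood of each $\theta_p$, the integrand is smooth and $2\pi$-periodic, so integrating by parts twice produces the factor $(n+1)^{-2}$ and the three pieces $F_x''\Psi_\epsilon$, $F_x'\Psi_\epsilon'$ and $F_x\Psi_\epsilon''$. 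Combining the bounds $\norm{\Psi_\epsilon^{(j)}}_{L^\infty}\lesssim\epsilon^{-j}$, the fact that $\Psi_\epsilon^{(j)}$ for $j\ge1$ is supported in the $\epsilon$-annuli $\epsilon<\abs{\theta-\theta_p}<2\epsilon$, and the hypothesis $\norm{F_x^{(j)}(e^{i\theta})}\lesssim\abs{\theta-\theta_p}^{-j}$ on each arc near $\theta_p$, reproduces the same three $1/\epsilon$ contributions as in the single-singularity case, giving $\norm{x^\epsilon_n}\lesssim 1/(n^2\epsilon)$.

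The only genuinely new ingredient beyond Proposition~\ref{prop:Ritt sequences} is the kernel decomposition $z^\epsilon_n=\sum_p e^{in\theta_p}w^\epsilon_n$, which is what allows the $N$ modified partial sum conditions in~\eqref{eq:multi partial sum bound} to be invoked one at a time. I expect verifying this factorisation and the associated summation-by-parts estimate to be the main obstacle; once it is in hand, the remaining computations are essentially a routine repetition of the single-singularity argument.
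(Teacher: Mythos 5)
Your proposal is correct and follows essentially the approach the paper intends (the paper only sketches this proposition as ``the same approach as Proposition~\ref{prop:Ritt sequences}'' with bump functions placed at each point of $E$): you supply exactly the details left implicit there, namely the kernel factorisation $z^\epsilon_n=\sum_{p}e^{in\theta_p}w^\epsilon_n$ and summation by parts against the rotated partial sums from \eqref{eq:multi partial sum bound}, followed by the same twice-integrated main-term estimate with $\epsilon=1/n$. The only small point worth making explicit is that on the region where $\abs{\theta-\theta_p}\ge\delta$ for all $p$ the bound on $F_x''$ comes from continuity of $F_x''$ on the compact set $\{e^{i\theta}:\abs{\theta-\theta_p}\ge\delta \text{ for all } p\}\subseteq\T\setminus E$ rather than from the stated hypothesis, which only covers $\abs{\theta}\in(0,\delta]$; this is immediate and does not affect the argument.
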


Using this result and the fact that $E$-Ritt operators are power-bounded, we recover the decay rate in \eqref{eq:E-ritt rate}.

\begin{corollary}[{\cite[Theorem~2.9(ii)]{BouMer22}}]\label{thm:general multi ritt}
Let $X$ be a complex Banach space, $E $ a finite subset of $ \T$, and let $T \in \B(X)$ be an $E$-Ritt operator. Then
\begin{equation*}
    \bigg\|{T^n \prod_{\xi \in E} (\xi - T)} \bigg\|= O(n^{-1}),
    \qquad n \to \infty.
\end{equation*}
\end{corollary}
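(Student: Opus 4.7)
The plan is to apply the preceding proposition to the $\B(X)$-valued sequence $x_n \coloneqq T^n \prod_{\xi \in E}(\xi - T)$. Since every $E$-Ritt operator is power-bounded, $x \in \ell^\infty(\Z_+; \B(X))$, so the task reduces to verifying the three hypotheses of that proposition: the partial-sum bound \eqref{eq:multi partial sum bound}, the existence of a suitable $E$-boundary function for $G_x$, and the pointwise derivative estimates.

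Write $E = \{e^{i\theta_1}, \ldots, e^{i\theta_N}\}$ and set $Q_p \coloneqq \prod_{\xi \in E,\,\xi\neq e^{i\theta_p}}(\xi - T)$. Factoring $\prod_{\xi \in E}(\xi - T) = e^{i\theta_p}(I - e^{-i\theta_p}T) Q_p$ and using the telescoping identity $(I - e^{-i\theta_p}T) \sum_{k=0}^n (e^{-i\theta_p}T)^k = I - (e^{-i\theta_p}T)^{n+1}$ gives
\begin{equation*}
    \sum_{k=0}^n e^{-ik\theta_p} x_k = e^{i\theta_p} Q_p \bigl(I - e^{-i(n+1)\theta_p} T^{n+1}\bigr),
\end{equation*}
which is uniformly bounded in $n$ by power-boundedness. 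Summing the Neumann series yields $G_x(\lambda) = \prod_{\xi \in E}(\xi - T) R(\lambda, T)$ on $\E$; since the $E$-Ritt resolvent bound forces $\sigma(T) \cap \T \subseteq E$, the map $F_x(e^{i\theta}) \coloneqq \prod_{\xi \in E}(\xi - T) R(e^{i\theta}, T)$ is smooth on $\T \setminus E$, and dominated convergence against test functions $\psi \in C_E(\T)$ confirms that it serves as the $E$-boundary function of $G_x$.

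The main technical step is the derivative estimate near each singularity. Near $e^{i\theta_p}$ the $E$-Ritt bound gives $\norm{R(e^{i\theta}, T)} \lesssim |\theta - \theta_p|^{-1}$. The cancellation with the matching factor in the product comes from the resolvent identity $(e^{i\theta_p} - T)R(e^{i\theta}, T) = (e^{i\theta_p} - e^{i\theta}) R(e^{i\theta}, T) + I$, which rewrites
\begin{equation*}
    F_x(e^{i\theta}) = \bigl[(e^{i\theta_p} - e^{i\theta}) R(e^{i\theta}, T) + I\bigr] Q_p,
\end{equation*}
and the vanishing factor $|e^{i\theta_p} - e^{i\theta}| \lesssim |\theta - \theta_p|$ cancels the simple pole so that the bracketed expression remains bounded as $\theta \to \theta_p$. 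Since $\frac{d}{d\theta} R(e^{i\theta}, T) = -ie^{i\theta} R(e^{i\theta}, T)^2$, an easy induction gives $\norm{\frac{d^j}{d\theta^j} R(e^{i\theta}, T)} \lesssim \norm{R(e^{i\theta}, T)}^{j+1} \lesssim |\theta - \theta_p|^{-(j+1)}$. Applying the Leibniz rule to the displayed identity then yields $\norm{F_x^{(j)}(e^{i\theta})} \lesssim |\theta - \theta_p|^{-j}$ for $j \in \{0,1,2\}$, which (after shifting the local coordinate by $\theta_p$) is exactly the hypothesis of the preceding proposition. That proposition therefore delivers $\norm{x_n} = O(n^{-1})$, as required.
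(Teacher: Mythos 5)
Your proposal is correct and follows essentially the same route as the paper: the paper also deduces the corollary by applying the preceding proposition to the sequence $(T^n \prod_{\xi \in E}(\xi - T))_{n\ge 0}$, using the power-boundedness of $E$-Ritt operators. Your verification of the partial-sum bound, the identification $F_x(e^{i\theta}) = \prod_{\xi\in E}(\xi-T)R(e^{i\theta},T)$, and the cancellation via $(e^{i\theta_p}-T)R(e^{i\theta},T) = (e^{i\theta_p}-e^{i\theta})R(e^{i\theta},T) + I$ simply makes explicit the hypotheses the paper leaves to the reader.
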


\end{document}